\newtheorem{theorem}{Theorem}[section]
\newtheorem{lemma}[theorem]{Lemma}
\newtheorem{proposition}[theorem]{Proposition}
\newtheorem{remark}{Remark}
\newtheorem{definition}{Definition}[section]
\newtheorem{example}{Example}[section]
\newcommand{\QQ}{\mathbb{Q}}
\newcommand{\RR}{\mathbb{R}}
\newcommand{\ZZ}{\mathbb{Z}}
\newcommand{\CC}{\mathbb{C}}
\newcommand{\GG}{\mathbb{G}}
\newcommand{\UU}{\mathbb{U}}
\newcommand{\HH}{\mathbb{H}}
\newcommand{\PP}{\mathbb{P}}
\DeclareMathOperator{\Nm}{Nm}
\DeclareMathOperator{\Id}{Id}
\DeclareMathOperator{\Norm}{Norm}
\DeclareMathOperator{\Cor}{Cor}
\DeclareMathOperator{\Res}{Res}
\DeclareMathOperator{\End}{End}
\DeclareMathOperator{\Tr}{Tr}
\DeclareMathOperator{\disc}{disc}
\DeclareMathOperator{\Sym}{Sym}
\DeclareMathOperator{\Br}{Br}
\DeclareMathOperator{\GL}{GL}
\DeclareMathOperator{\SL}{SL}
\DeclareMathOperator{\SU}{SU}
\DeclareMathOperator{\Sp}{Sp}
\DeclareMathOperator{\SO}{SO}
\DeclareMathOperator{\KS}{KS}
\DeclareMathOperator{\MT}{MT}
\newcommand{\Nmm}{\widetilde{\Nm}}
\title{Hodge structure of K3 type with real multiplication and Simple Abelian Fourfolds with Definite Quaternionic Multiplication}
\author{Yuwei Zhu}
\begin{document}
\maketitle

\begin{abstract} In this paper we give a general construction of transcendental lattices for K3 surfaces with real multiplication by arbitrary field up to degree 6 along with formula for their discriminants. We also show that all simple Abelian fourfolds with definite quaternionic multiplication can be realized as Kuga-Satake varieties of K3 surfaces with Picard rank 16 and real multiplication by a quadratic field by keeping track of the arithmetic input on both sides.
\end{abstract}

\section{Introduction}

In \cite{Zar83} Zarhin has shown that the transcendental lattice of K3 surfaces are sometimes endowed with real or complex multiplication that are by definition Hodge endomorphisms. Since then, some progress has been made towards understanding such surfaces: van Geemen in \cite{Gee06} constructed transcendental lattices of rank 6 with real multiplication by a specific kind of real quadratic fields; Schlickewei in \cite{Sch09} computed the endomorphism ring of the associated Kuga-Satake varieties for general cases by a process that is not canonical; and Elsenhans and Jahnel (see \cite{E-J14}, \cite{E-J16} and \cite{E-J18}) used $L$-functions and point counting to write down specific formulae of K3 surfaces with quadratic RM. Using the language of Mumford-Tate group and generalizing Mumford's construction in \cite{Mum69}, we give a slightly different account of the resulting Kuga-Satake variety for K3 surfaces satisfying certain conditions on the rank of the transcendental lattice, which is a canonical construction. We shall also give a general formula for constructing the transcendental lattices of K3 surfaces with RM by arbitrary totally real field up to degree 6 over $\QQ$, which will allow us to compute the discriminant form of such lattices. Moreover, we will use these new results to show that all simple Abelian fourfolds with definite quaternionic multiplication are Kuga-Satake varieties of K3 surfaces with quadratic real multiplication, Picard rank 16, and provide a dictionary between the two:

\begin{theorem}\label{main} Any simple Abelian fourfold $A$ with endomorphism by a definite quaternionic order can be realized as a Kuga-Satake variety of a K3 surface $S$ of Picard rank $16$ with real multiplication by a quadratic field: if we let the polarization of $A$ to be given by $$\left (
\begin{array}{cc}
0 & D \\
-D & 0 \\
\end{array} \right )$$ then the Mumford-Tate group of $A$ is given by $SO(D)$ for a positive definite quadratic form $D$. Such $D$'s splits into a product of two conjugating conics $Q \times \overline{Q}$ up to a real quadratic field extension $K/\QQ$, with exactly one of which having real points, and the transcendental lattice of the associated K3 surface will have real multiplication by $K$ and its transcendental lattice obtained by the corestriction (to be introduced in section~\ref{sec4}) of $Q$. In particular, the discriminant of the transcendental lattice is $$\disc(T_S)= \disc(K/\QQ)^2 \cdot \Norm_{K/\QQ}(\disc(Q)).$$

Conversely, if $S$ is a Picard rank 16 K3 surface with real multiplication by a quadratic field $K$, then the transcendental lattice $T(S)$ is given by the corestriction of some $K$-ternary quadratic field $(V, Q)$ with signature (defined in~\ref{sec4}) $(2,1), (0,3)$. Up to similarity, $Q$ corresponds to an unique quaternion algebra $B$ defined over $K$ up to isomorphism, and the resulting simple factor of the Kuga-Satake variety of $S$ has rational quaternionic multiplication by $B_{\Cor}$, which represents the same class as $\Cor_{K/\QQ}(B)$ in $\Br(\QQ)$.
\end{theorem}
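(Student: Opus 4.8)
The plan is to establish a precise dictionary between three kinds of objects: (i) simple abelian fourfolds with definite quaternionic multiplication, (ii) their Mumford–Tate groups as special orthogonal groups of small-dimensional quadratic forms, and (iii) transcendental lattices of Picard-rank-16 K3 surfaces with quadratic RM, realized as corestrictions of ternary quadratic forms. The proof is really a back-and-forth between linear algebra over $K$ and $\QQ$ (quadratic forms, their discriminants and Clifford/quaternion invariants) and the Hodge-theoretic Kuga–Satake formalism developed in the earlier sections. Let me sketch both directions.

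For the forward direction, I would start with a simple abelian fourfold $A$ whose endomorphism algebra contains a definite quaternion order, and compute its Mumford–Tate group. The polarization form $\left(\begin{smallmatrix} 0 & D \\ -D & 0 \end{smallmatrix}\right)$ forces $\MT(A)$ to preserve a symmetric pairing, and the quaternionic multiplication cuts the symplectic group down; the standard exceptional isogeny in low rank ($\SO$ of a rank-$6$ form versus $\SL_2$ of a quaternion algebra, or the relevant accidental isomorphism in the definite case) identifies $\MT(A)$ with $\SO(D)$ for a positive definite form $D$ on a $6$-dimensional $\QQ$-vector space. The key structural input is that, after base change to the real quadratic field $K$ over which the quaternion algebra is defined, $D_K$ splits as an orthogonal product $Q\times\overline{Q}$ of two Galois-conjugate conics (ternary forms), with exactly one factor admitting real points — this is precisely the signature constraint that makes the corestriction land in signature $(2,1),(0,3)$, matching a K3 transcendental lattice with $\rho = 16$ (so $\operatorname{rk} T = 6$) and RM by $K$. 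I would then invoke the corestriction construction of Section~\ref{sec4} to recognize $D$ itself as $\Cor_{K/\QQ}(Q)$, and read off the discriminant from the multiplicativity/norm formula for corestriction of quadratic forms, giving $\disc(T_S)=\disc(K/\QQ)^2\cdot\Norm_{K/\QQ}(\disc(Q))$.

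For the converse, I would run the Kuga–Satake machinery in reverse. Starting from a Picard-rank-16 K3 surface $S$ with RM by the quadratic field $K$, the transcendental lattice $T(S)$ is a rank-$6$ $\QQ$-Hodge structure of K3 type with an action of $K$, so by the general construction of the earlier sections it is the corestriction of a $K$-ternary quadratic space $(V,Q)$; the signature computation on the two archimedean places of $K$ pins down the type as $(2,1),(0,3)$. Up to similarity a ternary quadratic form over $K$ corresponds bijectively to a quaternion algebra $B/K$ via its even Clifford algebra (the classical correspondence between conics and quaternion algebras), so $Q$ determines $B$ uniquely up to isomorphism. Finally, the Kuga–Satake variety's simple factor has endomorphism algebra governed by the Clifford algebra of the corestricted form $\Cor_{K/\QQ}(Q)$, and the compatibility of Clifford algebras with corestriction — together with the fact that corestriction of quadratic forms matches corestriction (the Galois-theoretic norm) of the associated Brauer classes — shows that this quaternion algebra $B_{\Cor}$ represents the class $\Cor_{K/\QQ}(B)$ in $\Br(\QQ)$.

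\textbf{The main obstacle} I anticipate is the identification $\MT(A)\cong\SO(D)$ together with keeping the quaternionic \emph{order} (not just the algebra) and the definiteness correctly aligned through all base changes: one must verify that the accidental low-rank isomorphism relating quaternionic and orthogonal groups is compatible with the rational and integral structures, and that "definite" on the abelian-variety side corresponds exactly to the signature $(0,3)$ conjugate factor on the K3 side. Equally delicate is the last sentence of the converse — matching $B_{\Cor}$ with $\Cor_{K/\QQ}(B)$ as Brauer classes — since this requires that the even Clifford algebra functor intertwine the quadratic-form corestriction of Section~\ref{sec4} with the Galois corestriction $\Cor_{K/\QQ}$ on the Brauer group. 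I would isolate this as a separate lemma, proving it by comparing Hasse invariants place-by-place (using that corestriction on $\Br$ is the product over places lying above, and that the Clifford invariant of a corestricted form is the corestriction of the Clifford invariant), which reduces the global statement to local quaternion-algebra identities that can be checked directly.
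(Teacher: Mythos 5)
There is a genuine gap in the forward direction, stemming from a misreading of what $D$ is. In the paper, $A$ is a fourfold, so $H^1(A,\QQ)$ is $8$-dimensional and the polarization $\left(\begin{smallmatrix} 0 & D \\ -D & 0\end{smallmatrix}\right)$ makes $D$ a \emph{rank-$4$} positive definite form; the identification $\MT(A)\cong\SO(D)$ comes from $\mathfrak{so}_4\cong\mathfrak{sl}_2\times\mathfrak{sl}_2$ (via Moonen--Zarhin's computation of the Hodge Lie algebra for definite-QM fourfolds), and ``$D$ splits as $Q\times\overline{Q}$'' means the quadric \emph{surface} $\{D=0\}\subset\PP^3$ is the product of two Galois-conjugate conics (its two rulings), not an orthogonal sum of ternary forms. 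You instead take $D$ to be a rank-$6$ positive definite form equal to $\Cor_{K/\QQ}(Q)$, i.e.\ to the transcendental lattice itself; this is numerically impossible, since the corestriction of a $Q$ of signature $(2,1),(0,3)$ has signature $(2,4)$ and cannot be definite, and it collapses the actual content of the theorem, which is the passage from the rank-$4$ polarization data on $A$ to the rank-$6$ lattice $T(S)$ through the ruling/conic correspondence. Two further steps of the paper's argument are absent from your sketch: (i) locating the K3-type Hodge structure, which the paper does by decomposing $\wedge^2 H^1(A)$ and $\Sym^2 H^1(A)$ into $\SL_2\times\SL_2$-representations and showing $\wedge^2 W_{1,1}=W_{2,0}\oplus W_{0,2}$ has Hodge numbers $(1,4,1)$; and (ii) the use of \emph{simplicity} of $A$ to rule out the quadric splitting into two conics already over $\QQ$ --- if it did, $W_{2,0}$ and $W_{0,2}$ would be $\QQ$-subspaces and $A$ would be isogenous to a product. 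Without (ii) you cannot produce the quadratic field $K$ at all.

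Your converse direction is closer to the mark: $T(S)=\Cor_{K/\QQ}(V,Q)$ with signature $(2,1),(0,3)$ by the Section~\ref{sec4} machinery, and the ternary-form/quaternion-algebra correspondence over $K$ are exactly the paper's ingredients. However, for the endomorphism algebra of the Kuga--Satake factor the paper does not argue via ``compatibility of the even Clifford functor with corestriction'' plus Hasse invariants; it identifies the Kuga--Satake variety with the output of the generalized Mumford construction of Section~\ref{sec3} (by matching the $\Nmm$-representation and the embedding of the Deligne torus), where $\End(A)\otimes\QQ=B_{\Cor}$ is computed directly. Your Clifford-algebra route is essentially Schlickewei's approach, which the paper explicitly mentions as an alternative but non-canonical argument; it could be made to work, but as stated it presupposes the very identification of the Kuga--Satake Hodge structure that the generalized Mumford construction is introduced to supply.
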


This paper is organized as follows. In section \ref{sec2}, we recall definitions and some general facts about Mumford-Tate groups, K3 surfaces and Kuga-Satake construction. In section \ref{sec3}, we provide a generalized Mumford's construction that realizes the norm-1 group of some quaternion algebras as Mumford-Tate group of some Abelian varieties with quaternionic multiplication. In section \ref{sec4}, we give a general construction of lattices of K3 type with real multiplication by arbitrary RM field. In section \ref{sec5}, we use the results in section \ref{sec3} and \ref{sec4} to show that all simple Abelian fourfolds with quaternionic multiplication can be realized as the Kuga-Satake variety of some RM K3 surface, and in fact we have fairly nice control over the arithmetic inputs and outputs of this correspondence.

\renewcommand{\abstractname}{Acknowledgments}

\begin{abstract} The author would like to thank Prof. Bert van Geemen for numerous comments, clarifications and suggestions on this paper, Prof. Brendan Hassett for pointing to the subject of K3 surfaces and their Mumford-Tate groups, Prof. Yuri Zarhin for the references on the classification of simple Abelian fourfolds, Prof. John Voight on arithmetic properties of Clifford algebras, and Prof. Frank Calegari for pointing to an error in the draft. The author is supported by the NSF grants DMS-1551514 and DMS-1701659.
\end{abstract}

\section{Background}\label{sec2}

In this section we introduce the definition of Mumford-Tate group and Kuga-Satake construction associated to the transcendental lattice of a K3 surface. 

Recall that a $\QQ$-Hodge structure of weight $m$ is a finite dimensional $\QQ$-vector space $V$ together with a decomposition $V_{\CC} = \bigoplus_{p+q=m} V_{\CC}^{p,q}$ such that $\overline{V_{\CC}^{p,q}} = V_{\CC}^{q,p}$. We can define the action of the Deligne torus $\mathbb{S} := \Res_{\CC/\RR}\GG_{m, \CC}$ on $V_{\CC}^{p,q}$ to be multiplication by $z^{-p}\overline{{z}^{-q}}$. This is equivalent to giving a representation $h: \mathbb{S} \rightarrow \GL(V_{\RR})$. (See 1.1, 3.2, 3.3 of \cite{Moo04}) In particular, one can restrict the map $h$ to the unit circle $\UU = \{ z | z\overline{z}=1 \}$.

\begin{definition} (4.1 of \cite{Moo04}) Let $V$ be a $\QQ$-Hodge structure given by the homomorphism $h: \mathbb{S} \rightarrow \GL(V)_{\RR}$. We define the (general) Mumford-Tate group of $V$, notation $\MT(V)$, to be the smallest algebraic $\QQ$-subgroup $M$ in $\GL(V)$ such that $h$ factors through $M_{\RR} \in \GL(V)_{\RR}$.

Similarly, we define the special Mumford-Tate group to be the smallest algebraic $\QQ$-subgroup such that the restriction of $h$ onto $\UU$ factors through over $\RR$. 

(\cite{Zar83} 0.3.1.1) An element $g$ in $\End(V)$ is called a Hodge endomorphism if it commutes with the action of of the Mumford-Tate group.
\end{definition}

\begin{remark} In some literature (e.g. \cite{Zar83}), the special Mumford-Tate group is called the Hodge group.
\end{remark}

One key property of the Mumford-Tate group is that it translates the problem of locating sub-Hodge structures in tensorial constructions of a $\QQ$-Hodge structure into the problem of locating irreducible $\QQ$-subrepresentations:

\begin{proposition} (4.4 of \cite{Moo04}) Let $W \subset V^{\otimes a} \otimes (V^{\vee} )^ {\otimes b}$ be a $\QQ$-subspace. Then $W$ is a sub-Hodge structure if and only if $W$ is stable under the action of $\MT(V)$ on $V^{\otimes a} \otimes (V^{\vee} )^ {\otimes b}$.

In particular, $W$ corresponds to the $\QQ$-span of a Hodge class if and only if $W$ is 1-dimensional trivial representation of $\MT(V)$.
\end{proposition}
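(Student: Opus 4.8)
The plan is to reduce both assertions to the defining minimality property of $\MT(V)$, using the standard dictionary between the Hodge decomposition and the $h(\mathbb{S})$-action. First I would record the basic translation: the homomorphism $h\colon \mathbb{S} \to \GL(V)_\RR$ extends functorially to a representation on the tensor space $T := V^{\otimes a} \otimes (V^\vee)^{\otimes b}$, and this representation is precisely what defines the induced Hodge structure on $T$, with $T_\CC^{p,q}$ being the $z^{-p}\overline{z}^{-q}$-eigenspace. Consequently, a $\QQ$-subspace $W \subseteq T$ is a sub-Hodge structure if and only if $W_\CC = \bigoplus_{p,q}(W_\CC \cap T_\CC^{p,q})$, which is equivalent to $W_\RR$ (equivalently $W_\CC$) being stable under $h(z)$ for every $z \in \mathbb{S}(\RR) = \CC^\times$.

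The core of the argument is then a stabilizer-plus-minimality argument. I would introduce the stabilizer $G_W := \{ g \in \GL(V) : g\cdot W = W\}$, where $\GL(V)$ acts on $T$ through its natural tensor representation; since $W$ is a $\QQ$-subspace, $G_W$ is an algebraic $\QQ$-subgroup of $\GL(V)$. For the easy direction, if $W$ is stable under $\MT(V)$ then $\MT(V) \subseteq G_W$, so $h$---which factors through $\MT(V)_\RR$ by definition---factors through $(G_W)_\RR$; hence $h(\mathbb{S}(\RR))$ preserves $W_\RR$ and $W$ is a sub-Hodge structure. For the converse, suppose $W$ is a sub-Hodge structure, so $h(\mathbb{S})$ factors through $(G_W)_\RR$. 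Since $h$ also factors through $\MT(V)_\RR$, it factors through $(\MT(V) \cap G_W)_\RR$, and $\MT(V)\cap G_W$ is again an algebraic $\QQ$-subgroup. By the \emph{minimality} of $\MT(V)$ among $\QQ$-subgroups through which $h$ factors, we get $\MT(V) \subseteq \MT(V) \cap G_W \subseteq G_W$, i.e. $\MT(V)$ stabilizes $W$. This minimality step is where all the content lies; the rest is bookkeeping.

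For the ``in particular'' statement I would specialize to a $1$-dimensional $W = \QQ v$. If $v$ is a Hodge class, i.e. a rational class of type $(0,0)$, then $h(z)v = v$ for all $z$, so $W$ is a sub-Hodge structure and hence $\MT(V)$-stable by the first part; the action of $\MT(V)$ on the line $W$ is then a $\QQ$-character $\chi\colon \MT(V) \to \GG_m$ with $\chi \circ h$ trivial. Applying minimality once more to the $\QQ$-subgroup $\ker\chi$, through whose real points $h$ factors, forces $\MT(V) = \ker\chi$, so $\chi$ is trivial and $W$ is the trivial $1$-dimensional representation. Conversely, if $W$ is a $1$-dimensional trivial $\MT(V)$-representation, then $W$ is a sub-Hodge structure by the first part and $h$ acts trivially on it, so any generator $v$ has type $(0,0)$ and is a Hodge class spanning $W$.

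The main obstacle, such as it is, is purely formal: one must check that stability under $h(\mathbb{S}(\RR))$ and stability under $h(\mathbb{S})$ as algebraic groups coincide, which follows from Zariski-density of $\mathbb{S}(\RR)$ in the torus $\mathbb{S}$, and that the various stabilizers and kernels are genuinely defined over $\QQ$ so that minimality of $\MT(V)$ applies to them. No hard analysis or geometry is needed; the entire proposition is a consequence of the definition of $\MT(V)$ as a smallest $\QQ$-subgroup, together with the functoriality of $h$ on tensor constructions.
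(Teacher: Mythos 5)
The paper quotes this proposition from Moonen's notes without giving a proof, so there is no internal argument to compare against; your write-up supplies the standard proof, and it is essentially correct. The stabilizer-plus-minimality mechanism is exactly right: $G_W$ is an algebraic $\QQ$-subgroup because $W$ is a $\QQ$-subspace and the tensor representation is defined over $\QQ$; $h$ factors through $(G_W)_\RR$ precisely when $W$ is a sub-Hodge structure (your appeal to Zariski density of $\mathbb{S}(\RR)$ is the right way to pass from stability under $h(\CC^\times)$ to stability under the algebraic torus, hence to the eigenspace decomposition of $W_\CC$); and minimality of $\MT(V)$ then forces $\MT(V)\subseteq G_W$. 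The character argument for the one-dimensional case is likewise sound.

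One caveat on the ``in particular'' clause. You take ``Hodge class'' to mean a rational class of type $(0,0)$, and under that convention your argument closes. But $V^{\otimes a}\otimes (V^\vee)^{\otimes b}$ has weight $m(a-b)$, and in the paper's own use of this statement (the example immediately following it, and the endomorphism computations in later sections) the relevant classes are $(1,1)$-classes in weight $2$. On such a class $h(z)$ acts by $(z\bar z)^{-1}$, so the line it spans is a trivial representation of the \emph{special} Mumford-Tate group (where $z\bar z=1$) but only a character --- a power of the norm --- for the general $\MT(V)$; triviality for the full $\MT(V)$ requires weight $0$ or a Tate twist. This is an imprecision already present in the statement as quoted rather than a gap in your proof, but you should flag which group and which weight convention you are using, since the paper silently switches to the special Mumford-Tate group at exactly the point where it invokes the ``in particular'' clause.
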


\begin{example} Consider the case when $m=1$ and the $\QQ$-Hodge structure $V$ describes an Abelian variety $A$ up to isogeny. The special Mumford-Tate group for $m=1$ always lies inside $\Sp(V, \psi)$, since it has to preserve a symplectic form that is the polarization $\psi$ of the Abelian variety. Then one can prove that the endomorphism algebra of $A$ as a vector space lies inside the space of its Hodge endomorphisms. Indeed, an endomorphism (upon tensoring with $\QQ$) on $A$ is given by a correspondence, i.e. a $(1,1)$-class in $H^2(A \times A)$ which then can be realized as a $1$-dimensional invariant subspace in $V^{\otimes 2}$. Since the action of special Mumford-Tate group on $V^{\otimes 2}$ is given by conjugation, $g \in V^{\otimes 2} \cong \End_{\QQ}(V)$ commutes with $\gamma \in \MT(V)$ if and only if $\gamma g \gamma^{-1} = g$, i.e. $g$ spans a trivial representation of $\MT(V)$.
\end{example}

In \cite{Zar83} Zarhin has studied extensively the Mumford-Tate group and its actions associated to a K3 surface. Since the Picard group of K3 surfaces are spanned by algebraic cycles (also known to be Hodge cycles), the Mumford-Tate group acts trivially on the Picard group. We recall the definition of K3 surfaces and some facts about its transcendental lattice:

\begin{definition}(\cite{Huy16} Chapter 1, Definition 1.1) A K3 surface over a field $k$ is a complete nonsingular variety $S$ of dimension 2 such that $$\Omega^2_{S/k} \cong \mathcal{O}_S \text{ and } H^1(S, \mathcal{O}_S) = 0.$$

(\cite{Huy16} Chapter 1, Proposition 3.5) The integral cohomology $H^2(S, \ZZ)$ of a complex K3 surface $S$ endowed with the intersection form is abstractly isomorphic to the lattice $E_8(-1)^{\oplus 2} \oplus U^{\oplus 3}$. The Picard lattice embeds primitively into this lattice, its orthogonal complement is called the transcendental lattice of $S$, denoted $T(S)$. $T(S)$ is known to have signature $(2, 20-\rho)$ where $\rho$ is the rank of the Picard group.
\end{definition}

We recall some key properties of $T(S)$ and its Mumford-Tate group proved in \cite{Zar83}:

\begin{proposition} Let $S$ be a complex K3 surface, $T(S)$ its transcendental lattice. Let $G$ be the Mumford-Tate group of $T(S)_{\QQ}$ as a weight 2 $\QQ$-Hodge structure. Then the following is true:

\begin{enumerate}
\item (1.4.1 of \cite{Zar83}) $T(S)_{\QQ}$ is an irreducible representation of $G$.

\item (1.6 of \cite{Zar83}) The ring of Hodge endomorphisms of $T(S)$ tensored with $\QQ$ is either a totally real field or a CM field. We say the K3 surface $S$ has RM or CM respectively.
\item (2.1 and 2.2.1 of \cite{Zar83}) In the case when $S$ has RM by a totally real field $K$, the intersection form on $T(S)_{\QQ}$ is the trace of a $K$-form $Q$, and the special Mumford-Tate group is the special orthogonal group $\SO_K(Q)$ viewed as a $\QQ$ group.
\end{enumerate}
\end{proposition}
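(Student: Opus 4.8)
The three assertions are the foundational structure results of \cite{Zar83}, and the plan is to derive each from two inputs already available above: the dictionary of Proposition~4.4 (a $\QQ$-subspace of a tensor construction is a sub-Hodge structure precisely when it is $\MT$-stable, and a Hodge class is a one-dimensional trivial subrepresentation), together with the fact that the intersection form $\psi$ on $T(S)_\QQ$ is a polarization and the numerical input $h^{2,0}=h^{0,2}=1$ coming from $\Omega^2_{S}\cong\mathcal O_S$. Throughout I write $V=T(S)_\QQ$, $n=\dim_\QQ V=22-\rho$, and use that $\psi$ is symmetric, nondegenerate, and, by the Riemann bilinear relations, positive-definite on the relevant Hodge pieces.

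For (1), suppose $W\subset V$ were a nonzero proper sub-Hodge structure. Because $\psi$ is a polarization it is nondegenerate on $W$, so $V=W\oplus W^{\perp}$ as Hodge structures. Since $\dim_\CC H^{2,0}=1$, the line $H^{2,0}$ lies in exactly one of $W_\CC, W^{\perp}_\CC$; after relabelling I may assume $W^{2,0}=0$, whence $W_\CC\subset H^{1,1}$ and $W$ consists of rational $(1,1)$-classes. By the Lefschetz $(1,1)$-theorem every rational $(1,1)$-class is algebraic, so $W\subset \mathrm{NS}(S)_\QQ$; but $V=T(S)_\QQ$ is by definition orthogonal to $\mathrm{NS}(S)$, forcing $W\cap\mathrm{NS}(S)_\QQ=0$ and hence $W=0$, a contradiction. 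Thus $V$ has no proper nonzero sub-Hodge structure, and by Proposition~4.4 it is an irreducible representation of $G$.

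For (2), set $E=\End_G(V)$, the algebra of Hodge endomorphisms. By part (1) and Schur's lemma $E$ is a finite-dimensional division algebra over $\QQ$. Since every element of $E$ commutes with $G$ it preserves the Hodge decomposition, so $E$ acts $\CC$-linearly on the one-dimensional space $H^{2,0}$; this yields a $\QQ$-algebra homomorphism $E\to\End_\CC(H^{2,0})=\CC$, whose kernel is a proper two-sided ideal and therefore zero. Hence $E$ embeds in $\CC$, so $E$ is a (commutative) number field. The polarization equips $E$ with its adjoint involution $f\mapsto f^{\dagger}$, defined by $\psi(fx,y)=\psi(x,f^{\dagger}y)$; positivity of $\psi$ makes this a positive involution, and comparison with the Hermitian form on $H^{2,0}$ identifies $\dagger$, under $E\hookrightarrow\CC$, with complex conjugation. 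A number field carrying a positive involution is, by Albert's classification, either totally real (involution trivial) or a CM field (involution complex conjugation on a totally imaginary quadratic extension of a totally real field); this gives the RM/CM dichotomy.

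For (3), assume the RM case $E=K$ totally real, so $\dagger$ is trivial and every $k\in K$ is $\psi$-self-adjoint: $\psi(kx,y)=\psi(x,ky)$. Viewing $V$ as a $K$-vector space and using that the trace form $(\lambda,\mu)\mapsto\Tr_{K/\QQ}(\lambda\mu)$ is nondegenerate, I can descend $\psi$ to a unique symmetric $K$-bilinear form $Q\colon V\times V\to K$ characterized by $\Tr_{K/\QQ}\bigl(\lambda\,Q(x,y)\bigr)=\psi(\lambda x,y)$ for all $\lambda\in K$, so that $\psi=\Tr_{K/\QQ}\circ Q$. Writing $G_0$ for the special Mumford--Tate group, $G_0$ preserves $\psi$ and commutes with $K$, and a $K$-linear isometry of $\psi$ is automatically an isometry of $Q$ (again by nondegeneracy of the trace pairing), giving $G_0\subseteq\mathrm{Cent}_{\SO(V,\psi)}(K)=\SO_K(Q)$ regarded as a $\QQ$-group by restriction of scalars. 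The reverse inclusion $G_0=\SO_K(Q)$ is the genuinely substantive point and the step I expect to be the main obstacle: here I would use that $G_0$ is a connected reductive $\QQ$-subgroup of the $\QQ$-simple group $\SO_K(Q)$ acting irreducibly on $V$ with commutant exactly $K$, base-change to $\bar\QQ$ so that $V\otimes\bar\QQ=\bigoplus_{\sigma\colon K\hookrightarrow\bar\QQ}V_\sigma$ with $\SO_K(Q)$ acting as $\prod_\sigma\SO(Q_\sigma)$ in the standard representation on each factor, and show $G_0$ surjects onto each factor with the factors unlinked. Irreducibility together with $\End_{G_0}(V)=K$ excludes proper diagonal or parabolic subgroups, and the signature computation (exactly one real place carries the $(2,n-2)$-form housing $H^{2,0}\oplus H^{0,2}$, the others being negative definite) pins down the real form; assembling these forces $G_0$ to be the full $\SO_K(Q)$, as in Zarhin's argument~2.2.1.
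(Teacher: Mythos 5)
The paper itself does not prove this proposition --- it is recalled verbatim from \cite{Zar83} with pointers to 1.4.1, 1.6, 2.1 and 2.2.1 --- so your proposal must be judged directly against Zarhin's arguments, and for most of it you fare well. Your part (1) (polarization splits off any sub-Hodge structure, the factor missing $H^{2,0}$ consists of rational $(1,1)$-classes, Lefschetz $(1,1)$ pushes it into $\mathrm{NS}(S)_{\QQ}$, which meets $T(S)_{\QQ}$ trivially since $\mathrm{NS}$ is nondegenerate) and part (2) (Schur gives a division algebra, the unital map to $\End_{\CC}(H^{2,0})\cong\CC$ is injective by simplicity, so $E$ is a number field; the adjoint involution of the polarization is positive and matches complex conjugation, whence totally real or CM by Albert) are correct and are in substance Zarhin's proofs. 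Likewise the descent of $\psi$ to the $K$-form $Q$ via nondegeneracy of the trace pairing, and the forward inclusion $G_0\subseteq\SO_K(Q)$ (a $K$-linear isometry of $\Tr_{K/\QQ}\circ Q$ is an isometry of $Q$), are complete and correct.

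The genuine gap is exactly where you flag it, and your proposed repair would not work as stated. The claim that \emph{irreducibility together with} $\End_{G_0}(V)=K$ \emph{excludes proper subgroups} is false: $G_2\subset\SO(7)$ in its seven-dimensional representation, $\mathrm{Spin}(7)\subset\SO(8)$ via the spin representation, and $\SO(3)\subset\SO(5)$ via $\Sym^4$ all act irreducibly with commutant equal to the ground field (and excluding parabolics is vacuous, since parabolic subgroups are not reductive and cannot act irreducibly anyway). The signature bookkeeping pins down the real form of $\SO_K(Q)$ but says nothing about such proper irreducible subgroups. What actually closes the argument in Zarhin's 2.2--2.3 is the Hodge cocharacter together with $h^{2,0}=1$: the grading element coming from $h$ acts on $T(S)_{\CC}$ with eigenvalues $-1,0,1$ and the $\pm 1$-eigenspaces are \emph{one-dimensional}, and Zarhin classifies the irreducible pairs $(\mathfrak{g},V)$ admitting a semisimple element with this multiplicity-one spectrum --- only the standard representation of the full orthogonal (resp.\ unitary, in the CM case) Lie algebra survives; applied at each embedding $\sigma\colon K\hookrightarrow\RR$ this simultaneously forces surjection onto, and unlinking of, the factors $\SO(Q_{\sigma})$. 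Without importing this multiplicity-one classification, your assembly step in part (3) does not go through.
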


We will explain the $K$-form $Q$ in greater detail in section~\ref{sec4} of this paper. The last ingredient we need is the Kuga-Satake construction of a K3 surface $S$, which returns for each K3 surface $S$ an Abelian variety $\KS(T(S))$. A good reference is given in chapter 4 of \cite{Huy16}. We will not go into the detail of this definition, but we will recall two properties of this construction:

\begin{lemma}~\label{ksrep} (\cite{Huy16} Chapter 4, 2.6) There is an inclusion of Hodge structures: $T(S)_{\QQ} \subset H^2(\KS(T(S))^2, \QQ)$. Moreover, the Mumford-Tate groups of $T(S)_{\QQ}$ and $H^1(\KS(T(S)))$ share the same Lie algebra. 

(\cite{Gee06} Lemma 5.5) In the case when $S$ has real multiplication by a field of degree $d$ and the $K$-rank of the transcendental space $n$, over $\CC$ the Mumford-Tate group is given by $SO(n)^d$, and its representation on $H^1(\KS(T(S)), \CC)$ sums of exterior products of the spin representation of $SO(n)^d$.
\end{lemma}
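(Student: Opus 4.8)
The plan is to read both statements off the explicit Kuga--Satake recipe, in which $H^1$ of the Kuga--Satake variety is identified with an even Clifford algebra. Recall that the weight-$2$ Hodge structure of K3 type on $V := T(S)_\QQ$ is encoded by a morphism $h\colon \UU \to \SO(V_\RR, Q)$, and that the defining step of the construction lifts $h$ through the isogeny $\mathrm{Spin}(V_\RR) \to \SO(V_\RR)$ to $\tilde h\colon \UU \to \mathrm{Spin}(V_\RR) \subset C^+(V_\RR)^\times$. Letting $\tilde h$ act on the even Clifford algebra $C^+(V)$ by left multiplication equips it with a weight-$1$ Hodge structure, and a standard anti-involution furnishes a polarization; the resulting abelian variety is $\KS(T(S))$, so that $H^1(\KS(T(S)), \QQ) \cong C^+(V)$ as $\QQ$-Hodge structures. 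First I would establish the inclusion of Hodge structures. Clifford multiplication gives a $\QQ$-linear embedding $c\colon V \hookrightarrow \End(C^+(V)) = C^+(V) \otimes C^+(V)^\vee$, and the key point is that the adjoint relation $\tilde h(z)\, c(v)\, \tilde h(z)^{-1} = c(h(z)v)$ --- which is exactly how $\mathrm{Spin}$ covers $\SO$ --- makes $c$ a morphism of Hodge structures up to a Tate twist. Since $H^1(\KS)\otimes H^1(\KS)$ sits inside $H^2(\KS\times\KS,\QQ) = \wedge^2 H^1(\KS\times\KS,\QQ)$ through the Künneth cross terms, composing yields the asserted inclusion $V \subset H^2(\KS(T(S))^2, \QQ)$, with matching weights.

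For the equality of Lie algebras I would note that the special Mumford--Tate group of $H^1(\KS)$, being the smallest $\QQ$-subgroup through which $\tilde h$ factors, lands in $\mathrm{Spin}(V)$, whereas that of $V$ is the smallest one through which $h$ factors. The isogeny $\pi\colon \mathrm{Spin}(V) \to \SO(V)$ carries the former onto the latter with finite central kernel, since $\pi\circ\tilde h = h$; as an isogeny induces an isomorphism on Lie algebras, the two special Mumford--Tate groups have the same Lie algebra. Passing to the general Mumford--Tate groups adjoins in each case only the one-dimensional central torus coming from the weight cocharacter, so the full Lie algebras agree as claimed.

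To obtain the RM statement I would feed in the structure recorded in the Zarhin proposition above: when $S$ has real multiplication by a totally real field $K$ of degree $d$, the form $Q$ is the trace $\Tr_{K/\QQ}$ of a $K$-bilinear form of $K$-rank $n$, and the special Mumford--Tate group is $\Res_{K/\QQ}\SO_K(Q)$. Base-changing along $K\otimes_\QQ\CC \cong \CC^d$ splits this Weil restriction into the product over the $d$ embeddings $\sigma\colon K\hookrightarrow\CC$, giving $\SO(n)^d$ over $\CC$, whose relevant cover for the action on $H^1(\KS,\CC)$ is $\mathrm{Spin}(n)^d$ by the first part. To decompose the representation I would invoke the graded-tensor identity $C(V_1\perp V_2)\cong C(V_1)\,\hat\otimes\,C(V_2)$: writing $V\otimes\CC = \bigoplus_\sigma V^\sigma$ as the orthogonal sum of its $d$ conjugates gives $C(V\otimes\CC)\cong \hat\bigotimes_\sigma C(V^\sigma)$, and each factor, under Clifford multiplication by the $\sigma$-th $\mathrm{Spin}(n)$, is a sum of copies of its (half-)spin representation $S_\sigma$. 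Extracting the even part $C^+$ and collecting the total-parity-even summands then exhibits $H^1(\KS,\CC)$ as a sum of external products $S_{\sigma_1}\boxtimes\cdots\boxtimes S_{\sigma_d}$ of the spin representations of the factors of $\mathrm{Spin}(n)^d$, which is the assertion.

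The step I expect to demand the most care is this final decomposition: one must track the $\ZZ/2$-grading of the graded tensor product in order to isolate $C^+$ correctly and then handle the parity of $n$, since for $n$ odd the half-spin representations coincide while for $n$ even they split, changing precisely which external products occur. A secondary subtlety is the bookkeeping of Tate twists in the first part, needed so that $c$ becomes a genuine morphism of weight-$2$ Hodge structures rather than merely a $\mathrm{Spin}$-equivariant linear map.
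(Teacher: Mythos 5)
The paper offers no proof of this lemma: it is quoted as background, the first part cited to \cite{Huy16} (Ch.~4, 2.6) and the second to \cite{Gee06} (Lemma 5.5). Your outline is, in substance, a faithful reconstruction of the arguments in those two sources: the identification $H^1(\KS(T(S)),\QQ)\cong C^+(V)$ with the weight-$1$ structure coming from the lift $\tilde h$ of $h$ through $\mathrm{Spin}(V_\RR)\to\SO(V_\RR)$, the $\mathrm{Spin}$-equivariant embedding of $V$ into $\End(C^+(V))$ followed by K\"unneth, the isogeny argument for equality of Lie algebras, and the splitting of the complexified Clifford algebra as a graded tensor product over the $d$ embeddings of $K$, with the parity bookkeeping you flag at the end, are exactly the steps van Geemen carries out. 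So the route is the expected one; there is no divergence of method to report.

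One step as written would fail and needs the standard repair: left Clifford multiplication by $v\in V$ is \emph{odd}, so it maps $C^+(V)$ into the odd part $C^-(V)$, and your map $c$ does not land in $\End(C^+(V))$. The fix, as in \cite{Huy16}, is to fix $f_1\in V$ with $Q(f_1)\neq 0$ and set $c(v)(x)=v\,x\,f_1$; since right multiplication by $f_1$ commutes with the left action of $\tilde h(z)$, the equivariance relation you rely on, $\tilde h(z)\,c(v)\,\tilde h(z)^{-1}=c(h(z)v)$, survives unchanged, and the Tate-twist bookkeeping you mention is then the only remaining care point. A second, smaller precision: for the \emph{general} Mumford--Tate group of $H^1(\KS(T(S)))$ the ambient group is not $\mathrm{Spin}(V)$ but the Clifford group $\mathrm{GSpin}(V)$, since it contains the central weight torus; your final sentence about adjoining the one-dimensional central torus on both sides is implicitly handling this, and making it explicit (kernel $\GG_m$ of $\mathrm{GSpin}(V)\to\SO(V)$ versus the scalar torus in $\GG_m\cdot\SO(V)$) closes the argument. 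With these two points spelled out, your proposal is complete and agrees with the proofs in the cited references.
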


\section{A generalization of Mumford's construction}\label{sec3}

In this section we generalize the construction of Abelian varieties given by Mumford in \cite{Mum69}.

Throughout the paper let $K$ be a totally real number field, let $d \geq 2$ be its degree. Let $B$ be a quaternion algebra defined over $K$ such that\begin{equation}\label{ram} B \otimes _{\QQ} \RR \cong \HH^{d-1} \oplus M_2(\RR) \end{equation} where $\HH$ stands for the Hamiltonian. It is a general fact that there exists field extensions $K'$ of $K$ such that $B \hookrightarrow B \otimes_{K} K' \cong M_2(K')$, hence we can define the reduced norm of an element in $B$ to be its determinant up to this embedding, and reduced trace its trace. The collection of reduced norm 1 elements form a group, usuall called the norm-1 group of the quaternion $B$.

Recall that in \cite{Mum69} Mumford wrote down the explicit expression of $B$ under Serre's corestriction functor: Let $B^{(i)} = B \otimes_K K^{(i)}$ be the quaternion algebra over $\overline{\QQ}$. Tensoring them together we get a central simple algebra $D = B^{(1)} \otimes B^{(2)} \otimes ... \otimes B^{(d)}$. The Galois group $Gal(\overline{\QQ}/\QQ)$ has a natural semilinear action on $D$ given by the following: if $\tau: \overline{\QQ} \rightarrow \overline{\QQ}$ is an element in $Gal(\overline{\QQ}/\QQ)$, then $\tau \circ \sigma_i = \sigma_{\pi(i)}$ for some permutation $\pi$. Similarly $\tau$ acts on $\overline{\QQ}$, which then gives a semilinear isomorphism $B^{(i)} \xrightarrow{\sim} B^{\pi(i)}$, thus a semilinear automorphism on $D$. We define the elements fixed by such Galois actions in $D$ to be $\Cor_{K/\mathbb{Q}}(B)$. It is now a central simple algebra defined over $\mathbb{Q}$. Notice that at each local place $p$ of $\QQ$, the class of $\Cor_{K/\QQ}(B)$ in $\Br(\QQ_p)$ is equal to the sum of all classes $B_{\pi} \in \Br(K_{\pi})$, where $\pi | p$. This implies that $\Cor_{K/\QQ}(B)$ is a 2-torsion class in $\Br(\QQ)$, and is Brauer equivalent to a quaternion algebra $B_{\Cor}$ defined over $\QQ$.

\begin{remark} It is not hard to see that the condition we imposed on $B$ would force $B_{\Cor}$ to be indefinite when $d$ is odd, and definite when $d$ is even.
\end{remark}

We generalize Mumford's construction as follows: consider the central simple algebra $B_{\Cor} \otimes_{\QQ} \Cor_{K/\QQ}(B)$. It is now of dimension $4^{d+1}$; moreover, $B_{\Cor} \otimes_{\QQ} \Cor_{K/\QQ}(B) \cong M_{2^{d+1}}(\QQ)$ exactly because its Brauer classes at all local places add up to 0. Now consider the group of norm-1 elements $G$ in $B$. We define the augmented Nm map to be the following:
$$\begin{array}{cccc}
\widetilde{\Nm}: & B & \rightarrow & B_{\Cor} \otimes \Cor_{K/\QQ}(B) \cong M_{2^{d+1}}(\QQ) \\
 & \gamma & \mapsto & \Id \otimes \gamma^{(1)} \otimes \gamma^{(2)} \otimes ... \otimes \gamma^{(d)} \\
\end{array}$$

Therefore $\widetilde{\Nm}$ is a faithful $\QQ$-representation of $G$ into $\GL_{2^{d+1}}(\QQ)$. Hence it acts on $\QQ^{2^{d+1}}$. To see that it can be in fact realized as a special Mumford-Tate group of an Abelian variety of dimension $2^d$, we show that the image of $\widetilde{\Nm} \otimes \CC$ lies inside $Sp_{2^{d+1}}(\CC)$ and give an embedding of Deligne's torus. 

\begin{lemma} Over $\RR$, $G_{\RR} \cong \SU_2(\RR)^{d-1} \times \SL_2(\RR)$, and it preserves a symplectic form. 
\end{lemma}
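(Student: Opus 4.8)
The plan is to prove the two assertions in turn: first the real structure of $G$, then the existence of a $G$-invariant symplectic form on the representation $\Nmm$.

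For the structure of $G_\RR$, the norm-$1$ group is the Weil restriction $G=\Res_{K/\QQ}\SL_1(B)$, where $\SL_1(B)$ denotes the $K$-group of reduced-norm-$1$ elements of $B$. Since $K$ is totally real of degree $d$ we have $K\otimes_\QQ\RR\cong\RR^d$, indexed by the real embeddings $\sigma_1,\dots,\sigma_d$, and base change of the Weil restriction gives
\[
G_\RR\;\cong\;\prod_{i=1}^{d}\SL_1\!\left(B\otimes_{K,\sigma_i}\RR\right).
\]
By hypothesis \eqref{ram} exactly $d-1$ of the factors $B\otimes_{K,\sigma_i}\RR$ are $\HH$ and one is $M_2(\RR)$. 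It remains to identify each factor's norm-$1$ group: on $\HH$ the reduced norm is the positive-definite quaternion norm, so its norm-$1$ elements are the unit quaternions $\SU_2(\RR)$; on $M_2(\RR)$ the reduced norm is the determinant, so its norm-$1$ group is $\SL_2(\RR)$. This yields $G_\RR\cong\SU_2(\RR)^{d-1}\times\SL_2(\RR)$.

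For the invariant form I would work through the involution picture. Equip each $B^{(i)}$ with its standard (quaternion-conjugation) involution, which is of symplectic type, and note that for a reduced-norm-$1$ element $\gamma$ one has $\gamma^{(i)}\overline{\gamma^{(i)}}=\nrd(\gamma^{(i)})=1$, so each $\gamma^{(i)}$ is unitary. Consequently $\Nmm(\gamma)=\Id\otimes\gamma^{(1)}\otimes\cdots\otimes\gamma^{(d)}$ is unitary for the tensor-product involution $\tau$ on $B_{\Cor}\otimes\Cor_{K/\QQ}(B)\cong M_{2^{d+1}}(\QQ)$, where on the $B_{\Cor}$ slot I place a first-kind involution still to be chosen. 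Thus the image of $G$ lands in the isometry group of the bilinear form attached to $\tau$, and this form is symplectic exactly when $\tau$ has symplectic type. Using the rule that the type of a tensor product of first-kind involutions multiplies the types (orthogonal $=+1$, symplectic $=-1$), the $d$ symplectic involutions on the $B^{(i)}$ contribute $(-1)^d$, so I choose the involution on $B_{\Cor}$ of type $(-1)^{d+1}$ — the standard symplectic involution when $d$ is even, and an orthogonal involution when $d$ is odd — making $\tau$ symplectic, and hence $\Nmm(G)\subset\Sp_{2^{d+1}}$ over $\QQ$, and a fortiori over $\RR$.

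I expect the main obstacle to be precisely this parity bookkeeping on the auxiliary $B_{\Cor}$ factor: one must choose a first-kind involution on $B_{\Cor}$ of type $(-1)^{d+1}$ (both types exist on any quaternion algebra, the symplectic one being the canonical involution and the orthogonal ones being $\mathrm{Int}(u)$ composed with it for a pure quaternion $u$), verify that this choice does not disturb unitarity of $\Nmm(\gamma)$ since $G$ acts trivially on the $B_{\Cor}$ slot, and confirm that the resulting involution $\tau$ is of symplectic rather than orthogonal type. This is consistent with, and governed by, the remark that $B_{\Cor}$ is definite for even $d$ and indefinite for odd $d$. Once $\tau$ is pinned down as symplectic, nondegeneracy of the attached form is automatic from that of the factors, and preservation of a symplectic form by $\Nmm(G)$ follows.
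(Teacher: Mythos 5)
Your proof is correct, but for the symplectic form it takes a genuinely different route from the paper. For the first assertion you argue via $G=\Res_{K/\QQ}\SL_1(B)$ and the ramification condition \eqref{ram}; the paper treats this part as immediate and does not spell it out, so your argument is a welcome addition. For the invariant form, the paper works over $\CC$: it identifies $\Nmm\otimes\CC$ with $W_{1,\dots,1}^{\oplus 2}$ and decomposes $\wedge^2(W_{1,\dots,1}\oplus W_{1,\dots,1})=3\wedge^2 W_{1,\dots,1}\oplus \Sym^2 W_{1,\dots,1}$, locating a trivial summand $W_{0,\dots,0}$ in $\Sym^2 W_{1,\dots,1}$ for $d$ even and in $\wedge^2 W_{1,\dots,1}$ for $d$ odd; this exhibits an invariant alternating $2$-form over $\CC$ (strictly, one still has to descend to $\RR$ or $\QQ$ and check nondegeneracy, which the paper leaves implicit). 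You instead put the canonical (symplectic-type) involution on each $B^{(i)}$, use $\gamma\overline{\gamma}=\nrd(\gamma)=1$ to see that $\Nmm(G)$ is unitary, and multiply types to choose an involution of type $(-1)^{d+1}$ on the $B_{\Cor}$ slot so that the total involution on $M_{2^{d+1}}(\QQ)$ is symplectic; its adjoint form is then an honest nondegenerate alternating form over $\QQ$. Your parity bookkeeping is right (symplectic${}^{\otimes d}$ has type $(-1)^d$, and $(-1)^d\cdot(-1)^{d+1}=-1$), and both involution types do exist on $B_{\Cor}$. The one step you leave tacit is that the tensor-product involution on $B^{(1)}\otimes\cdots\otimes B^{(d)}$ commutes with the semilinear Galois action and hence descends to $\Cor_{K/\QQ}(B)$; this holds because the canonical involution is defined over $K$ and so is permuted equivariantly, but it deserves a sentence. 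On balance your approach buys rationality and nondegeneracy of the form for free, at the cost of the involution formalism, whereas the paper's weight computation is also what it reuses later (it is needed again in the endomorphism computation), which is presumably why the author chose it.
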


\begin{proof} By the definition of $\Nmm$, over $\CC$, $\Nmm$ is isomorphic to the direct sum of two copies of $W_{1,...,1}$, i.e. the exterior product of $d$-many standard representations of $SL_2(\CC)$: $\CC^2 \boxtimes \CC^2 \boxtimes ... \boxtimes \CC^2$. One can compute the decomposition of the exterior product of this representation:$$\wedge^2 (W_{1,...,1} \oplus W_{1, ..., 1}) = 3\wedge^2 W_{1,...,1} \oplus \Sym^2 W_{1,...,1} $$
And $$\Sym^2 W_{1,...,1} = W_{2,2,...,2} \oplus \bigoplus_{\text{even number of 0's in the weight}} W_{2,...,0,...,2} $$
$$\wedge^2 W_{1,...,1} = W_{2,2,...,0} \oplus ... \oplus W_{0,2,...,2} \oplus \bigoplus_{\text{odd number of 0's in the weight}} W_{2,...,0,...,2} $$
When $d$ is even (resp. odd), $\Sym^2 W_{1,...,1}$ (resp. $\wedge^2 W_{1,...,1}$) will have a copy of $W_{0,0,...,0}$ i.e. the trivial representation. Hence $G_{\RR}$ preserves a symplectic form.
\end{proof}

Analogous to Mumford's original construction, we can embed the torus $\Res_{\CC/\RR}(\CC^*)$ via the map: $$h: e^{i\theta} \rightarrow  \Id_{2^d} \otimes \left ( \begin{array}{cc}
cos\theta & sin\theta \\
-sin\theta & cos\theta   \end{array} \right)$$

Notice that $G$ is a $\QQ$-simple group, and the above construction gives abelian varieties $A$ up to isogeny with their Mumford-Tate group contained in $G$. This implies that the Mumford-Tate group of $A$ is exactly $G$. Moreover, we can explicitly describe the endomorphism algebra of $A$:

\begin{theorem}Let $(K, B)$ be given. Then the endomorphism algebra $\End(A) \otimes \QQ$ of a generic member $A$ of the resulting Abelian varieties is exactly $B_{\Cor}$, i.e. $A$ has quaternionic multiplications. 
\end{theorem}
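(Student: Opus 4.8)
The plan is to identify $\End(A)\otimes\QQ$ with the commutant of the Mumford--Tate group and then compute that commutant via the double centralizer theorem. By the discussion in the Example of Section~\ref{sec2}, the endomorphism algebra $\End(A)\otimes\QQ$ of an abelian variety is exactly its algebra of Hodge endomorphisms, i.e.\ the centralizer of $\MT(A)$ acting on $V=H_1(A,\QQ)\cong\QQ^{2^{d+1}}$. Since we have just argued that $\MT(A)=G$ for a generic member, the task reduces to computing the centralizer $Z$ of $\widetilde{\Nm}(G)$ inside $\End_\QQ(V)\cong B_{\Cor}\otimes_\QQ\Cor_{K/\QQ}(B)\cong M_{2^{d+1}}(\QQ)$. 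Here the shape of the image is favorable: by definition $\widetilde{\Nm}(\gamma)=\Id\otimes\gamma^{(1)}\otimes\cdots\otimes\gamma^{(d)}$, and since $\gamma^{(1)}\otimes\cdots\otimes\gamma^{(d)}$ is Galois-invariant it lands in the corestriction factor, so the $\QQ$-algebra generated by $\widetilde{\Nm}(G)$ is contained in $1\otimes\Cor_{K/\QQ}(B)$, while the factor $B_{\Cor}\otimes 1$ automatically centralizes it.

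The key step is to show that the image of $G$ generates \emph{all} of $\Cor_{K/\QQ}(B)$ as a $\QQ$-algebra. Because $\Cor_{K/\QQ}(B)$ is central simple over $\QQ$, it suffices to verify this after base change to $\CC$. Over $\CC$ we have $G_\CC\cong\SL_2(\CC)^d$ and $\Cor_{K/\QQ}(B)\otimes\CC\cong M_2(\CC)^{\otimes d}=M_{2^d}(\CC)$, with the map becoming $(\gamma_1,\dots,\gamma_d)\mapsto\gamma_1\otimes\cdots\otimes\gamma_d$. Every invertible $2\times 2$ matrix over $\CC$ is a scalar multiple of an element of $\SL_2(\CC)$, so the $\CC$-span of $\SL_2(\CC)$ is all of $M_2(\CC)$; taking the span factorwise, the span of the pure tensors $\gamma_1\otimes\cdots\otimes\gamma_d$ is all of $M_{2^d}(\CC)$. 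Hence the generated subalgebra, being already full over $\CC$, must equal $\Cor_{K/\QQ}(B)$ over $\QQ$.

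Granting this, the $\QQ$-subalgebra generated by $\widetilde{\Nm}(G)$ is exactly $1\otimes\Cor_{K/\QQ}(B)$, a central simple subalgebra of $B_{\Cor}\otimes\Cor_{K/\QQ}(B)$. The standard computation of centralizers in a tensor product of central simple algebras gives $Z_{B_{\Cor}\otimes\Cor_{K/\QQ}(B)}\bigl(1\otimes\Cor_{K/\QQ}(B)\bigr)=B_{\Cor}\otimes 1$, since the center of $\Cor_{K/\QQ}(B)$ is $\QQ$. Therefore $Z\cong B_{\Cor}$, which yields $\End(A)\otimes\QQ\cong B_{\Cor}$, i.e.\ $A$ has quaternionic multiplication by $B_{\Cor}$.

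I expect the main obstacle to be the generation claim: pinning down that $\widetilde{\Nm}(G)$ spans the whole corestriction algebra rather than some proper central simple subalgebra. The scalar-multiple trick makes the $\SL_2$ case transparent, but one must check carefully that the box-product structure really fills up $M_{2^d}(\CC)$ and that passing from a $\CC$-span of the correct dimension back to a $\QQ$-algebra is legitimate --- this is precisely where central simplicity of $\Cor_{K/\QQ}(B)$ is invoked. Once that is in place, the identification of $\End\otimes\QQ$ with the commutant and the concluding double-centralizer computation are formal.
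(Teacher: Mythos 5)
Your proof is correct, but it takes a genuinely different route from the paper's. You compute $\End(A)\otimes\QQ$ as the commutant of the Hodge group directly: after observing that $\widetilde{\Nm}(G)$ lands in $1\otimes\Cor_{K/\QQ}(B)$, you show it generates that entire central simple algebra (by base change to $\CC$, where the pure tensors from $\SL_2(\CC)^d$ span $M_{2^d}(\CC)$), and then apply the centralizer theorem for tensor products of central simple algebras to land on $B_{\Cor}\otimes 1$. The paper establishes only the easy inclusion $B_{\Cor}\hookrightarrow \End(A)\otimes\QQ$ by the same commuting-tensor-factor observation, and for the reverse inclusion counts the trivial subrepresentations (Hodge classes) in $H^2(A,\QQ)$, matches that count against the Albert-type classification of possible endomorphism algebras, and excludes the competing cases (totally real cubic RM, CM, trivial endomorphisms) by a dimension count on simple factors and by results of Moonen--Zarhin on Hodge groups. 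Your argument is more self-contained and arguably cleaner: it is Schur's lemma plus the double centralizer theorem, it avoids the classification entirely, and it identifies the algebra on the nose rather than among a list of candidates. Two small points would make it airtight: (i) passing from the $\QQ$-algebra generated by $\widetilde{\Nm}(G(\QQ))$ to the $\CC$-span of $\widetilde{\Nm}(G(\CC))$ uses Zariski density of $G(\QQ)$ in $G_{\CC}$, which holds here because the norm-one group $G$ is connected semisimple, hence unirational, over $\QQ$; (ii) identifying $\End(A)\otimes\QQ$ with the full commutant of the Hodge group (not merely an inclusion into it) uses that every Hodge endomorphism of an abelian variety is algebraic, a fact the paper's proof also invokes.
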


\begin{proof} We first describe how $B_{\Cor}$ acts as Hodge endomorphism. Recall that Hodge endomorphisms are by definition elements in $\End(H^1(A,\QQ))$ that are commutative with the action of Mumford-Tate group, and that upon tensoring with $\QQ$ the algebra of Hodge endomorphisms and endomorphisms of Abelian varieties coincide.

Note that for an element $g \in B_{\Cor}$, there is a faithful embedding into $M_{2^{d+1}}(\QQ) \cong \End(H^1(A,\QQ))$: $g \mapsto g \otimes \Id \otimes \Id \otimes ... \otimes \Id$ where $\Id$ is the identity in each $B^{(i)}$. It is obvious that $g$ commutes with the action of $\Nmm(\gamma)$ for any $\gamma \in G$.

To see that $B_{\Cor}$ indeed gives everything, we notice that when $d$ is odd (resp. even), $H^2(A, \QQ)$ has $3$ (resp. $1$) copies of Hodge cycles, which are the counts of Abelian varieties having either endomorphisms by an indefinite quaternion over $\QQ$ (resp. definite quaternion over $\QQ$) or RM by a cubic field (resp. no extra endomorphism or CM by an imaginary quadratic field). Since the simple factors of $A$ always have dimension $2^{d-1}$ or $2^{d}$ (resp. the Lie algebra of the special Mumford-Tate group is a Lie algebra over a totally real number field), the latter case is not possible by dimension count (resp. by general facts about special Mumford-Tate groups established in \cite{Z-M95} section 2.6). 
\end{proof}

\begin{remark} It is possible for $A$ to split. For example, when $K$ is a cubic field and $\Cor_{K/\QQ}(B)$ is already $M_8(\QQ)$, then the endomorphism algebra of the resulting $A$ is $M_2(\QQ)$, i.e. it is the product of two isogenous Abelian fourfolds. This is exactly the fourfolds Mumford constructed in his original paper (cf. \cite{Mum69}), and the reason we are calling the construction introduced in this section \say{generalized Mumford construction}. 
\end{remark}

\section{Constructing Hodge structure of K3 type with real multiplication}\label{sec4}

We recall the description of the transcendental lattice of a K3 surface with real multiplication structure:

\begin{proposition}
(2.1 in \cite{Zar83}) Let $K$ be a degree $d$ totally real field, $V$ an $n$-dimensional $K$-vector space equipped with a non-degenerate symmetric bilinear form $Q$. A Hodge structure of K3 type with real multiplication by $K$ is a vector space $V_0$ defined over $\QQ$ with the following extra structure:
\begin{enumerate}
\item There is a ring embedding $\phi: K \hookrightarrow End(V_0, \QQ)$ and an isomorphism of $\QQ$-vector spaces $f: V \rightarrow V_0$ such that for any $k \in K$ and $v \in V$, we have
$$f(kv) = \phi(k)(f(v))$$ 
\item There is a quadratic form $Q_0$ on $V_0$ such that $$Q_0(f(v), f(w)) = \Tr_{K/\QQ}(Q(v,w))$$
In particular, $Q_0(\phi(k)f(v), f(w)) = \Tr_{K/\QQ}(k \cdot Q(v,w))$
\end{enumerate} 

Conversely, given a Hodge structure of K3 type with real multiplication by $K$, one can find a unique $(V,Q)$.
\end{proposition}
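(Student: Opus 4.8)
The plan is to read this statement as the standard dictionary between $K$-quadratic spaces and their Weil restrictions to $\QQ$, with the non-degeneracy of the trace form $\Tr_{K/\QQ}$ (equivalently, separability of $K/\QQ$) as the single arithmetic fact driving both directions. I would construct the forward and backward passages separately and then observe that they are mutually inverse, which is what makes the $(V,Q)$ produced in the converse unique.

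For the forward direction, given $(V,Q)$ over $K$ I would simply restrict scalars: set $V_0 := V$ regarded as a $\QQ$-vector space of dimension $nd$, take $f$ to be the identity map, and let $\phi(k)$ be multiplication by $k \in K$, which is visibly a $\QQ$-algebra embedding $K \hookrightarrow \End(V_0,\QQ)$ satisfying $f(kv)=\phi(k)f(v)$. I would then define $Q_0 := \Tr_{K/\QQ}\circ Q$. Symmetry and $\QQ$-bilinearity are inherited from $Q$, and the relation $Q_0(\phi(k)f(v),f(w)) = \Tr_{K/\QQ}(k\,Q(v,w))$ is just $K$-linearity of $Q$ in the first slot. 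The only point needing an argument is non-degeneracy of $Q_0$: if $\Tr_{K/\QQ}(Q(v,w))=0$ for all $w$ and $v\neq 0$, choose $w_0$ with $Q(v,w_0)=\alpha\neq 0$; then $\Tr_{K/\QQ}(k\alpha)=0$ for all $k$, and since $k\alpha$ ranges over all of $K$ this contradicts surjectivity of the trace. Transporting the Hodge decomposition of $V_0$ back through $f$ exhibits $\phi(K)$ as Hodge endomorphisms, so $V_0$ has RM by $K$.

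For the converse I would begin from a weight-$2$ Hodge structure $V_0$ of K3 type with $\phi: K\hookrightarrow \End(V_0,\QQ)$ acting by Hodge endomorphisms and polarization form $Q_0$, and reconstruct $(V,Q)$. View $V_0$ as a $K$-vector space $V$ via $\phi$. To build $Q$, fix $v,w$ and consider the $\QQ$-linear functional $k \mapsto Q_0(\phi(k)v,w)$ on $K$; since the pairing $(a,b)\mapsto \Tr_{K/\QQ}(ab)$ is non-degenerate, there is a unique $Q(v,w)\in K$ with $Q_0(\phi(k)v,w)=\Tr_{K/\QQ}(k\,Q(v,w))$ for every $k$. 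Taking $k=1$ recovers property (2). $K$-linearity in the first slot follows by substituting $\phi(k')v$ and comparing with $\Tr_{K/\QQ}(kk'\,Q(v,w))$; non-degeneracy of $Q$ over $K$ follows from that of $Q_0$; and uniqueness of $Q$, hence of the pair $(V,Q)$, is immediate from the non-degeneracy of the trace pairing used to define it.

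The main obstacle is the symmetry of the reconstructed $Q$, which is not formal: unwinding the definitions shows that $Q(v,w)=Q(w,v)$ for all $v,w$ is equivalent to each $\phi(k)$ being self-adjoint with respect to $Q_0$. This is exactly where the RM (as opposed to CM) hypothesis enters. The adjoint of a Hodge endomorphism is again a Hodge endomorphism, so the adjoint involution of $Q_0$ preserves $\phi(K)$ and restricts to a positive involution of the totally real field $K$; by Albert's classification such an involution must be the identity, whence $\phi(k)^*=\phi(k)$ (this is the content of the relevant part of \cite{Zar83}). Once self-adjointness is in hand, symmetry of $Q$ drops out of the symmetry of $Q_0$, the two constructions are visibly inverse to one another, and both existence and uniqueness follow.
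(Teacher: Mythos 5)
Your argument is correct, and it follows the same route as the source: the paper states this proposition as a quotation of Theorem 2.1 of \cite{Zar83} and gives no proof of its own, and your reconstruction of $Q$ via the non-degenerate trace pairing, with symmetry reduced to self-adjointness of $\phi(k)$ and settled by the positivity of the adjoint involution on the totally real field, is exactly Zarhin's argument. You correctly isolate the one non-formal point (self-adjointness of the $K$-action with respect to $Q_0$), which is indeed where the RM-versus-CM dichotomy enters.
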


In \cite{me1} the author introduced a construction called \say{corestriction} of quadratic spaces, which gives rise to a functor:
$$\Cor_{K/\QQ}: \{K \text{-quadratic spaces}\} \rightarrow \{\QQ \text{-quadratic spaces}\} $$
such that any Hodge structure of K3 type with RM is the corestriction of some $K$-quadratic spaces satisfying certain signature condition and discriminant condition.

We recall the construction of the corestriction of quadratic spaces functor: Let $K,d,V,n,Q$ be as stated above. Let $\sigma_1, ..., \sigma_d$ be the $d$ distinct embeddings of $K$ into $\RR$. First we consider the space $$V \otimes_{\QQ} \RR \cong (K^{\sigma_1})^n \oplus (K^{\sigma_2})^n \oplus ... (K^{\sigma_d})^n$$ where $K^{\sigma_i}$ denotes $\RR$ with $K$ embedded into it via $\sigma_i$. Under this isomorphism, a vector $kv = v \otimes k$ in $V$ will be mapped to $ (\sigma_1(k)v^{(1)}, ..., \sigma_d(k)v^{(d)})$.
This space is endowed with a semilinear automorphism by the Galois group $G = Gal(K/\QQ)$: if $\sigma_j = \sigma_i \circ g$ for a $g \in G$, then $g$ takes $\sigma_i(k)\otimes v^{(i)}$ to $\sigma_j(k) \otimes v^{(j)}$.

\begin{definition}
The corestriction of $V$ from $K$ to $\QQ$ is defined to be the $\QQ$-subspace in $V \otimes_{\QQ} \RR$ invariant under the semilinear automorphisms by $G$. i.e.
$$\Cor_{K/\QQ} (V) = (V \otimes_{\QQ} \RR)^G $$
Similarly, use $Q^{\sigma_i}$ to denote the $\RR$-quadratic form $Q^{\sigma_i}( r_1 \otimes \sigma_i(x), r_2 \otimes \sigma_i(y)) := r_1 \cdot r_2 \sigma_i (Q(x,y))$. The corestriciton of $Q$, denoted $Q_0$ is the quadratic form on $(K^{\sigma_1})^n \oplus (K^{\sigma_2})^n \oplus ... (K^{\sigma_d})^n$ given by $Q^{\sigma_1} \oplus Q^{\sigma_2} \oplus ... Q^{\sigma_d} $. The $\QQ$ quadratic space we obtain in this way is denoted $\Cor_{K/\QQ}(V, Q)$, called corestriction of the quadratic space $(V,Q)$. Likewise, if $\Lambda$ is a projective $\mathcal{O}_K$ lattice in $(V,Q)$, we can define the corestriction of $\Lambda$ and denote it as $\Cor_{K/\QQ}(\Lambda, Q)$.
\end{definition}

We now prove that this space has a compatible $K$-multiplication structure that satisfies the conditions imposed by Zarhin's proposition:

\begin{lemma}
\begin{enumerate}
\item $K$ admits a canonical embedding $\phi$ into $\End(\Cor_{K/\QQ}(V), \QQ)$. Moreover, $\phi(k) \in \End(\Cor_{K/\QQ}(V), \ZZ)$ for any $k \in \mathcal{O}_K$.
\item $Q_0$ takes coefficients in $\QQ$ on $\Cor_{K/\QQ}(V)$. Moreover, let $\phi$ be the canonical imbedding of $K$ into $\End(\Cor_{K/\QQ}(V), \QQ)$, $f$ the canonical isomorphims of $\QQ$-vector spaces $f: V \rightarrow \Cor_{K/\QQ}(V)$, then we also have $Q_0(\phi(k)f(v), f(w)) = \Tr_{K/\QQ}(k \cdot Q(v,w))$.
\end{enumerate}
The discriminant group of $Q_0$ has order $$ | \disc(Q_0) |= \disc(K/\QQ)^n \cdot \Norm_{K/\QQ}(\disc(Q)).$$
\end{lemma}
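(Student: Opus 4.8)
The plan is to prove the three assertions in order, treating the $K$-module structure and the trace-compatibility of $Q_0$ as formal consequences of the construction, and reserving the real work for the discriminant formula.

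For the structural statements (1) and (2), I would make the identification $\Cor_{K/\QQ}(V)\cong V$ explicit. The map $f\colon v\mapsto v\otimes 1$ sends $V$ into $(V\otimes_{\QQ}\RR)^G$, because under the decomposition $V\otimes_{\QQ}\RR\cong\bigoplus_i (K^{\sigma_i})^n$ the element $v\otimes 1$ becomes the ``diagonal'' vector $(v^{(1)},\dots,v^{(d)})$, which is exactly what the semilinear $G$-action fixes. I define $\phi(k)$ to be the $\QQ$-linear operator induced by $K$-scalar multiplication on $V$; since $(kv)\otimes 1=k\cdot(v\otimes 1)$ has components $(\sigma_i(k)v^{(i)})_i$, one checks directly that $\phi(k)$ commutes with the $G$-action and hence preserves $\Cor_{K/\QQ}(V)$, giving the embedding $\phi\colon K\hookrightarrow\End(\Cor_{K/\QQ}(V),\QQ)$. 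When $k\in\mathcal{O}_K$ and $\Lambda$ is an $\mathcal{O}_K$-lattice, $\phi(k)$ preserves $\Cor_{K/\QQ}(\Lambda)=f(\Lambda)$, yielding the integrality claim. For (2) I evaluate $Q_0(f(v),f(w))=\sum_i Q^{\sigma_i}(v^{(i)},w^{(i)})=\sum_i\sigma_i(Q(v,w))=\Tr_{K/\QQ}(Q(v,w))$, which is rational; the twisted identity $Q_0(\phi(k)f(v),f(w))=\Tr_{K/\QQ}(k\,Q(v,w))$ then follows from $K$-bilinearity of $Q$.

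The heart of the matter is the discriminant formula, which I would obtain by a change-of-basis determinant computation. Fix an integral basis $\omega_1,\dots,\omega_d$ of $\mathcal{O}_K$ and an $\mathcal{O}_K$-basis $e_1,\dots,e_n$ of $\Lambda$, so that $\{\omega_a e_i\}_{a,i}$ is a $\ZZ$-basis of $\Cor_{K/\QQ}(\Lambda)$. In this basis the Gram matrix $G_1$ of $Q_0$ has entries $\Tr_{K/\QQ}(\omega_a\omega_b\,Q(e_i,e_j))$, and it is integral by (1)--(2) together with $Q(\Lambda,\Lambda)\subseteq\mathcal{O}_K$. Comparing with the basis $\{e_i^{(l)}\}$ adapted to the real decomposition $\bigoplus_l (K^{\sigma_l})^n$ --- in which $Q_0$ is block-diagonal with blocks $\sigma_l(M)$, $M=(Q(e_i,e_j))$ --- the two bases are related by the matrix $P$ with entries $\delta_{ij}\sigma_l(\omega_a)$, i.e. $P\cong I_n\otimes S$ with $S=(\sigma_l(\omega_a))_{l,a}$. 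Hence $\det G_1=(\det P)^2\prod_l\det(\sigma_l M)=(\det S)^{2n}\,\Norm_{K/\QQ}(\det M)$. Using the classical identity $(\det S)^2=\disc(K/\QQ)$ and $\det M=\disc(Q)$ gives $|\disc(Q_0)|=\disc(K/\QQ)^n\cdot\Norm_{K/\QQ}(\disc Q)$, and the order of the discriminant group equals this Gram determinant since $Q_0$ is a nondegenerate integral form.

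The main obstacle I anticipate is the bookkeeping in the change-of-basis step: getting the tensor structure $P\cong I_n\otimes S$ correct so that the exponent on $\disc(K/\QQ)$ emerges as $n$ (rather than $d$) and so that $\Norm_{K/\QQ}(\disc Q)$ is isolated cleanly. I would also address the projective (non-free) case, reducing to the free case via the localization $\mathcal{O}_K\otimes\ZZ_p\cong\prod_{\mathfrak p\mid p}\mathcal{O}_{K_{\mathfrak p}}$, over which every projective module is free, since the order of the discriminant group is the product of its local contributions. A minor point is the sign: the identity $(\det S)^2=\disc(K/\QQ)$ holds on the nose, so taking absolute values throughout yields the stated formula.
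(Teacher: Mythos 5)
Your proposal is correct, and for parts (1) and (2) it is essentially the paper's argument: both identify $\Cor_{K/\QQ}(V)$ with the diagonal image of $V$ under the Minkowski-type embedding, define $\phi(k)$ as $K$-scalar multiplication transported through $f$, and obtain $Q_0(f(v),f(w))=\sum_i\sigma_i(Q(v,w))=\Tr_{K/\QQ}(Q(v,w))$ by direct evaluation (the paper also notes, as you could, that rationality follows from Galois-invariance of $Q_0$).

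Where you genuinely diverge is the discriminant formula, and your treatment is the stronger one. The paper reduces ``without loss of generality'' to $n=1$ with a primitive element $a$, writes out the Gram matrix $\bigl(\Tr_{K/\QQ}(a^{i+j}c)\bigr)$, and then never actually evaluates its determinant --- the displayed formula $|\disc(Q_0)|=\disc(K/\QQ)^n\cdot\Norm_{K/\QQ}(\disc Q)$ is left to the reader to extract from the classical identity for trace forms, and the reduction to rank one is itself not innocuous for the discriminant (a rank-$n$ $\mathcal{O}_K$-lattice need not split as an orthogonal sum of rank-one pieces, and the exponent $n$ on $\disc(K/\QQ)$ has to be tracked). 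Your change-of-basis computation $\det G_1=(\det P)^2\prod_l\det(\sigma_l M)$ with $P\cong I_n\otimes S$ and $(\det S)^2=\disc(K/\QQ)$ does this bookkeeping explicitly in full rank $n$, isolates the factor $\Norm_{K/\QQ}(\det M)$ cleanly, and your localization argument covers projective non-free lattices, which the paper does not address at all. The only caveats are minor: you should fix an ordering convention so that the Gram matrix in the real basis is genuinely block diagonal (any ordering gives the same determinant since $\det(I_n\otimes S)=(\det S)^n$ regardless), and note that $\det S$ is real because $K$ is totally real, so no absolute values are lost in squaring.
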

\begin{proof} Without loss of generality we assume $V$ is $1$-dimensional (i.e. $V = K\cdot e$). The correstriction map in this case recovers the classical setting of Minkowski embeddings:
$$ \begin{array}{cccc}

f: & K & \hookrightarrow & \Pi (\RR^{\sigma_i}) \\
 & ke & \mapsto & (k^{\sigma_1}e^{(1)}, ..., k^{\sigma_d}e^{(d)})\\
\end{array}
$$
If we assume that $K$ has a primitive element $a$, we can describe $\phi$ simply by describing $a$. In particular, we can let $\phi$ to be the map such that $$\phi(a)f(k  \cdot e) := f(ak  \cdot e)$$ It is not hard to see that with respect to the $\QQ$-basis $\{f(e), f(a  \cdot e), ..., f(a^{d-1}  \cdot e) \}$ of $\Cor_{K/\QQ}(V)$, $\phi$ maps $a$ to its rational canonical form. This completes the proof of the first part.

A high-brow way of proving the rationality of $Q_0$ is by noticing that $Q_0$ is actually invariant under the semilinear automorphisms by the Galois group.

For the purpose of computation, we will write out $Q_0$ for a quadratic form $Q(e,e) = c$ on the 1-dimensional $K$-vector space $V = Ke = \QQ(a)e$. If we adopt the basis in the previous lemma, a straighforward computaion will give

$$Q_0 = \left ( \begin{array}{cccc}
\Tr_{K/\QQ}(c) & \Tr_{K/\QQ}(ac) & ... & \Tr_{K/\QQ}(a^{d-1} c) \\
\Tr_{K/\QQ}(ac) & \Tr_{K/\QQ}(a^2 c) & ... & \Tr_{K/\QQ}(a^d c) \\
 & ... & ... & \\
\Tr_{K/\QQ}(a^{d-1} c) & \Tr_{K/\QQ}(a^d c) & ... & \Tr_{K/\QQ}(a^{2d-2} c)\\

\end{array} \right )
$$

To prove the relation between $Q_0$ and $Q$, we observe that since we defined $\phi$ such that it satisfies $\phi(k)f(v) = f(k\cdot v)$, and that by the way we set up $Q_0$, we always have $$Q_0(\phi(k)f(v), f(w)) = (k \cdot Q(v,w))^{\sigma_1} + (k \cdot Q(v,w))^{\sigma_2} + ... +(k \cdot Q(v,w))^{\sigma_d}$$
The right hand side gives exactly $\Tr_{K/\QQ}(k \cdot Q(v,w))$.
\end{proof}

\begin{remark}
It is not difficult to see that if $\Lambda$ is an $\mathcal{O}_K$-integral lattice in $V$ (namely for any two $v, w \in \Lambda$, $Q(v,w) \in \mathcal{O}_K$), then $f(\Lambda)$ is a $\ZZ$ lattice in $\Cor_{K/\QQ}(V)$. Moreover, with the above description, we can compute the \emph{discriminant form} of the lattice. 
\end{remark}

At this point, all that remains is to make sure an $\mathcal{O}_K$-lattice is taken to an even lattice that admits a primitive embedding into the $K3$ lattice with signature $(2, n\cdot d -2)$. This poses some conditions on the quadratic form $Q$.

We say the signature of $Q$ is $(p_1, q_1) ,..., (p_d, q_d)$ if the signature of each embedding $Q^{\sigma_i}$ is given by $(p_i, q_i)$. The weight condition on the vectors in the transcendental lattice imposes the following condition on $Q$:

\begin{lemma}\label{sig} The signature of $Q$ must be of the form $(2, n-2), (0,n), ..., (0,n)$.
\end{lemma}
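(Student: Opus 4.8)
The plan is to combine the global signature of the corestriction, which is already pinned down by the K3 condition, with the refinement of the Hodge decomposition of $T(S)_\CC$ into eigenspaces for the real multiplication action. First I would record the global constraint: the rational transcendental lattice is $T(S)_\QQ = \Cor_{K/\QQ}(V,Q)$, and by the definition of the corestriction its real form is $Q_0\otimes\RR = Q^{\sigma_1}\oplus\cdots\oplus Q^{\sigma_d}$, so the signature of $Q_0$ is $(\sum_i p_i,\sum_i q_i)$. Since $T(S)$ is of K3 type its signature is $(2,nd-2)$, with exactly two positive directions, forcing $\sum_i p_i = 2$ and $\sum_i q_i = nd-2$. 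It then remains only to show that the two positive directions are concentrated in a single embedding.

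Next I would complexify and use the eigenspace decomposition. The $K$-action makes $T(S)_\CC$ a module over $K\otimes_\QQ\CC\cong\CC^d$, giving $T(S)_\CC = \bigoplus_{i=1}^d W_i$ into $\sigma_i$-eigenspaces, where $W_i = V\otimes_{K,\sigma_i}\CC$ is the complexification of the real component $V^{\sigma_i}$ carrying $Q^{\sigma_i}$. Because $K$ acts by Hodge endomorphisms, each $W_i$ is a sub-Hodge structure, so the Hodge decomposition refines the eigenspace decomposition: $T^{p,q}=\bigoplus_i (T^{p,q}\cap W_i)$. As $\dim_\CC T^{2,0}=1$, the line $T^{2,0}$ sits in a single eigenspace, and after relabeling I may assume $T^{2,0}\subset W_1$. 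Total reality enters here: for $k\in K$ and $v\in W_i$ one has $k\cdot\overline{v}=\overline{\sigma_i(k)v}=\sigma_i(k)\overline{v}$ since $\sigma_i(k)\in\RR$, so complex conjugation preserves each $W_i$. Hence $T^{0,2}=\overline{T^{2,0}}\subset W_1$ as well, and consequently $W_i\subset T^{1,1}$ for every $i\neq 1$.

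Finally I would read off the signatures using the Hodge–Riemann relations. Writing a generator of $T^{2,0}$ as $\omega=x+iy$ with $x,y\in T(S)_\RR$, the relations $Q_0(\omega,\omega)=0$ and $Q_0(\omega,\overline{\omega})>0$ show that $x,y$ span a positive-definite $2$-plane, while the real $(1,1)$-part $T^{1,1}\cap T(S)_\RR$ is negative definite (its dimension $nd-2$ exhausts the negative part). For $i\neq 1$ the real points $V^{\sigma_i}\subset W_i\subset T^{1,1}$ are real, hence lie in $T^{1,1}\cap T(S)_\RR$, so $Q^{\sigma_i}$ is negative definite of signature $(0,n)$; for $i=1$, $x,y\in W_1$ lie in $V^{\sigma_1}$ and furnish the two positive directions, the remaining $n-2$ dimensions lying in the negative $(1,1)$-part, so $Q^{\sigma_1}$ has signature $(2,n-2)$. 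This gives the asserted form $(2,n-2),(0,n),\dots,(0,n)$. The main obstacle is the bookkeeping of the middle step: one must verify that the $K$-eigenspaces are genuine sub-Hodge structures stabilized by complex conjugation, so that the single positive $2$-plane cannot be distributed across several embeddings; once this compatibility is secured, the signature count is forced.
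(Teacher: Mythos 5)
Your proof is correct and follows essentially the same route as the paper's: locate the one-dimensional $(2,0)$-part in a single $\sigma_i$-eigenspace, use total reality of $K$ to see that complex conjugation preserves each eigenspace so that the $(0,2)$-part lands in the same one, and then read off the signatures embedding by embedding. Your write-up is in fact more explicit than the paper's, which justifies the conjugation step only by a brief representation-theoretic remark and leaves the final Hodge--Riemann signature count implicit.
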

\begin{proof} We consider where the unique (up to scalar multiplication) $(2,0)$ vector lives. By our construction, over $\CC$ it lives inside some specific embedding of $\sigma_{i, \CC}: V \hookrightarrow \RR^n \otimes \CC \subset \Cor_{K/\QQ}(V, Q)\otimes \CC$. WLOG suppose that $i = 1$. Then we claim that $(0,2)$ vector must live inside the same subspace. This is essentially because the $\RR$-forms of $SO(n)$ and $SO_n(\CC)$ shares the same representation, hence if we consider the complex conjugate of the $(2,0)$ vector, it would be a $(0,2)$ vector living inside $\sigma_{i,\CC}: V \hookrightarrow \RR^n \otimes \CC $. Since up to scalar multiplication there is only one $(0,2)$ vector, we know that the signature of $Q$ is exactly stated as the lemma.
\end{proof}

\begin{theorem}
Any Hodge structure of $K3$ type with real multiplication by a totally real field $K$ of degree $d$ can be obtained through the correstriction of some $K$-vector space $V$ of dimension $n$ equipped with a quadratic form $Q$ such that the following conditions are satisfied:

\begin{enumerate}
\item $Q$ has signature $(2, n-2), (0,n), ..., (0,n) $ and maps to an even quadratic form under $\Cor_{K/\QQ}$.
\item The Hodge struture can be made integral if there exists an $\mathcal{O}_K$ lattice $\Lambda \subset V$ such that $\Cor_{K/\QQ}(\Lambda)$ admits a primitive embedding into the $K3$ lattice. 
\end{enumerate}
\end{theorem}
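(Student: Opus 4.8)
The plan is to assemble the theorem from the pieces already in place, reducing the rational statement to Zarhin's Proposition 2.1 together with the preceding Lemma and Lemma~\ref{sig}, and reducing the integral statement to the surjectivity of the period map for K3 surfaces. Throughout I would treat part (1) and part (2) separately, since the first concerns only the rational Hodge structure while the second concerns a choice of lattice inside it.

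For the rational part, I would begin with an arbitrary Hodge structure $H$ of K3 type with real multiplication by $K$ and invoke Zarhin's Proposition 2.1 to produce the unique $K$-quadratic space $(V, Q)$ attached to it, together with the data $(\phi, f, Q_0)$ satisfying $Q_0(f(v), f(w)) = \Tr_{K/\QQ}(Q(v,w))$. The content of the preceding Lemma is precisely that the corestriction $\Cor_{K/\QQ}(V, Q)$ carries a canonical embedding $\phi$ of $K$ and a canonical isomorphism $f$ for which $Q_0(\phi(k)f(v), f(w)) = \Tr_{K/\QQ}(k \cdot Q(v,w))$, i.e.\ it reproduces exactly the structure that Proposition 2.1 attaches to $(V, Q)$. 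By the uniqueness clause of that proposition, the underlying RM-module-with-trace-form of $H$ is thereby identified with $\Cor_{K/\QQ}(V, Q)$. To promote this to an identification of Hodge structures (and not merely of the underlying form), I would appeal to Lemma~\ref{sig}: the signature of $Q$ is forced to be $(2, n-2), (0,n), \ldots, (0,n)$, so that the line $H^{2,0}$ and its conjugate $H^{0,2}$ both lie in the single indefinite real factor indexed by $\sigma_1$, and the Hodge structure is then pinned down by the choice of a positive-definite $2$-plane there. This yields the signature part of condition (1); evenness is a condition on the chosen lattice, read off directly from the trace-form matrix for $Q_0$ displayed in the Lemma, and enters only in part (2).

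For the integral part, I would pass from the $K$-quadratic space to an $\mathcal{O}_K$-lattice $\Lambda \subset V$ as in the Remark, so that $\Cor_{K/\QQ}(\Lambda)$ is a $\ZZ$-lattice of signature $(2, nd-2)$ whose discriminant has order $\disc(K/\QQ)^n \cdot \Norm_{K/\QQ}(\disc(Q))$ by the Lemma's formula. Under the stated hypothesis that this lattice admits a primitive embedding into the K3 lattice $E_8(-1)^{\oplus 2} \oplus U^{\oplus 3}$, I would invoke the surjectivity of the period map: any point $\omega$ of the period domain of $\Cor_{K/\QQ}(\Lambda)$ lying in the RM-locus (the sub-period-domain cut out inside the $\sigma_1$-factor, nonempty by Lemma~\ref{sig}) is realized as the period of a complex K3 surface $S$, and for very general such $\omega$ the transcendental lattice $T(S)$ is exactly $\Cor_{K/\QQ}(\Lambda)$ with its real multiplication by $K$.

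The step I expect to be most delicate is the persistence of the real multiplication structure: I must ensure that for a very general period point in the RM-locus the algebra of Hodge endomorphisms is exactly $K$ rather than jumping to a larger field or to a CM algebra, and that the transcendental lattice is the full primitively embedded $\Cor_{K/\QQ}(\Lambda)$ rather than a proper sublattice. The primitive embedding itself is built into the hypothesis, so the genuine arithmetic work only resurfaces when one wants to exhibit concrete $\Lambda$, at which point the discriminant-form data above feeds into Nikulin's existence criterion; the semilinear-invariance description of $Q_0$ and the signature bookkeeping of Lemma~\ref{sig} are what guarantee the period point can be taken in the RM-locus, so that surjectivity of the period map delivers an honest K3 surface with real multiplication by $K$.
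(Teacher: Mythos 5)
The paper states this theorem without proof, treating it as an immediate summary of the preceding results of Section~4; your assembly --- Zarhin's Proposition~2.1 together with the uniqueness of $(V,Q)$ and the corestriction lemma for the rational part, Lemma~\ref{sig} for the signature, and surjectivity of the period map for the integral realization --- is exactly the intended argument. The delicate point you flag (that a very general period point in the RM locus has Hodge endomorphism algebra exactly $K$, not larger, and full transcendental lattice $\Cor_{K/\QQ}(\Lambda)$) is a genuine subtlety the paper does not address either, so your write-up is if anything more complete than the source.
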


\section{The case of $n=3$}\label{sec5}

In this section we prove the following correspondence:

$$\left (
\begin{array}{c}
\text{Abelian fourfold with QM} \\
\text{by $B_{\Cor}$, and Mumford-Tate group}\\
\text{preserving a rank 4 quadratic $\QQ$-form $Q_0$}
\end{array} \right ) \leftrightarrow \left (
\begin{array}{c}
\text{K3 surfaces $S$ with real multiplication} \\
\text{by the quadratic field $K$ such that}\\
\text{$Q_{0,K} \cong C \times \overline{C}$, $T(S) = \Cor_{K/\QQ}(Q)$}
\end{array} \right ) $$
where $\overline{C}$ denotes the Galois conjugation of $C$ over $\QQ$ and $Q$ denotes (up to scaling) the ternary quadratic form with the right signture that is determined by $C$.

\begin{theorem}Let S be a K3 surface with real multiplication by arbitrary RM field $K$ with degree no greater than $6$, and let its intersection form on the transcendental space $\Cor_{K/\QQ}(Q)$ be given by a ternary $K$-quadratic form $Q$. Then $Q$ is associated to a quaternion algebra $B$ defined over $K$ satisfying the ramifying condition at infinity (see \ref{ram}). Up to isogeny, the Kuga-Satake variety of $S$ is the products of Abelian varieties $A$ given by the generalized Mumford construction we introduced in section~\ref{sec3}. In particular, the endomorphism algebra of a simple factor is either trivial (when $B_{\Cor} = M_2(\QQ)$) or $B_{\Cor}$.
\end{theorem}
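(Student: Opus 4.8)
The plan is to assemble the correspondence from the two main constructions already developed: the generalized Mumford construction of Section~\ref{sec3} (which takes a pair $(K, B)$ and produces an Abelian variety with special Mumford-Tate group the norm-1 group $G$ of $B$), and the corestriction description of RM transcendental lattices from Section~\ref{sec4}. The bridge between them is the accidental isomorphism $SO(3) \cong PGL_2 \cong \Sp_2/\{\pm 1\}$ in rank $n=3$, which is exactly what pins the ternary quadratic form $Q$ to a quaternion algebra $B/K$. First I would recall that by part (3) of the proposition quoting \cite{Zar83}, the special Mumford-Tate group of $T(S)_\QQ$ is $\SO_K(Q)$ viewed as a $\QQ$-group, where $Q$ is the ternary $K$-form with signature $(2,1),(0,3),\dots,(0,3)$ forced by Lemma~\ref{sig}. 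Since $Q$ is a nondegenerate ternary form over $K$, its even Clifford algebra is a quaternion algebra $B$ over $K$, and there is a canonical isomorphism $\SO_K(Q) \cong B^{\times,1}/\{\pm 1\}$ identifying the special orthogonal group with the norm-1 group of $B$ modulo center. I would verify that the signature condition $(2,1)$ at the distinguished real place and $(0,3)$ at the others translates under this dictionary into precisely the ramification condition~\eqref{ram}, namely $B \otimes_\QQ \RR \cong \HH^{d-1} \oplus M_2(\RR)$: the definite places $(0,3)$ correspond to the Hamiltonians and the split place $(2,1)$ to $M_2(\RR)$.

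Next I would identify the Kuga-Satake variety. By Lemma~\ref{ksrep}, the Mumford-Tate group of $H^1(\KS(T(S)))$ shares its Lie algebra with that of $T(S)_\QQ$, and over $\CC$ the latter is $SO(3)^d$ with the Kuga-Satake representation built from the spin representations of the factors. The spin representation of $\mathfrak{so}(3)$ is the standard two-dimensional representation of $\mathfrak{sl}_2$, so over $\CC$ the relevant faithful symplectic representation is $\CC^2 \boxtimes \cdots \boxtimes \CC^2$ ($d$ factors), which is exactly the representation $W_{1,\dots,1}$ analyzed in the lemma of Section~\ref{sec3}. This matches the augmented norm map $\Nmm$ of the generalized Mumford construction applied to the same $(K,B)$. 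I would then argue that the two Abelian varieties-up-to-isogeny agree: both have special Mumford-Tate group $G$ (the norm-1 group of $B$, which is $\QQ$-simple), both carry the same $G$-representation as weight-one Hodge structure, and the Deligne torus is embedded compatibly in both via the rotation factor $h$, so the Hodge structures coincide up to isogeny. Finally, the theorem identifying $\End(A)\otimes\QQ = B_{\Cor}$ from Section~\ref{sec3} gives the endomorphism algebra of each simple factor, degenerating to $M_2(\QQ)$ exactly when $\Cor_{K/\QQ}(B)$ already splits.

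The main obstacle I expect is the matching of the Hodge structures, not merely their Mumford-Tate groups. Lemma~\ref{ksrep} only guarantees that $T(S)_\QQ$ and $H^1(\KS)$ share a Lie algebra and that the Kuga-Satake representation is a sum of exterior powers of spin representations; it does not immediately single out which summand is the genuine $H^1$ of a simple factor, nor does it automatically produce the same rational form of $G$ that the corestriction $B \mapsto B_{\Cor}$ gives on the Mumford side. I would handle this by tracking the arithmetic carefully: the even Clifford algebra recovers $B$ from $Q$ over $K$, Serre's corestriction sends $B$ to $\Cor_{K/\QQ}(B)$ and hence to its Brauer-equivalent quaternion $B_{\Cor}$, and one checks that this is precisely the algebra acting as Hodge endomorphisms of the half-spin representation realizing $H^1$ of the simple factor. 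The dimension count and the $\QQ$-simplicity of $G$, together with the restriction that the Lie algebra of a special Mumford-Tate group be defined over a totally real field (invoked via \cite{Z-M95} in Section~\ref{sec3}), should rule out any competing rational structure and force the simple factor to have endomorphism algebra exactly $B_{\Cor}$. The degree restriction $d \le 6$ enters only to guarantee that the resulting Abelian variety has dimension $2^{d}$ small enough that the classification of its possible endomorphism algebras is as clean as asserted, and I would note where this bound is used rather than re-deriving it.
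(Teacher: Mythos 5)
Your proposal follows essentially the same route as the paper: identify $B$ as the quaternion algebra attached to the ternary form $Q$ via the double cover $G \to \SO_K(Q)$ (the paper cites Voight rather than naming the even Clifford algebra, but it is the same fact), translate the signature of Lemma~\ref{sig} into the ramification condition (\ref{ram}), invoke van Geemen's description of the Kuga--Satake representation from Lemma~\ref{ksrep}, and conclude with the endomorphism theorem of Section~\ref{sec3}. The one step you leave schematic --- matching the Hodge structures rather than just the Mumford--Tate groups --- is exactly the step the paper pins down concretely, by locating $T(S)_{\CC}$ as $W_{2,0,\dots,0}\oplus\cdots\oplus W_{0,\dots,0,2}$ inside $H^2(A,\CC)$ with rational basis $\Id\otimes\Cor_{K/\QQ}(B^0)$ and observing that the Hodge numbers $(1,3d-2,1)$ force the unique $(1,1,1)$ summand to sit at the split real place, which in turn forces the Deligne torus to embed in $G$ precisely as in the generalized Mumford construction.
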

\begin{proof}
The proof of the theorem is mainly a generalization of the results in \cite{Gal00} and section 3 of \cite{me1}. Let $K, Q, B$ be as stated. Recall the classical fact that the norm-1 group $G \subset B$ can be realized as an (at most) double cover of $\SO_K(Q)$ (cf. Proposition 4.5.10 in \cite{Voi17}). Hence $G$ and the special Mumford-Tate group of $T(S)$ shares the same even weight representation. Furthermore, we can locate a copy of $T(S)_{\CC}$ in $B_{\Cor}\otimes \Cor_{K/\QQ}(B) \otimes \CC$ in the similar fashion as \cite{Gal00}: 
\begin{equation}\label{split}
T(S)_{\CC} = W_{2,0,0,..,0} \oplus W_{0,2,0,...,0} \oplus ... \oplus W_{0,0,...,0,2} \subset H^2(A, \CC). 
\end{equation}
And a $\QQ$ basis of $T(S)$ can be given by $\Id \otimes \Cor_{K/\QQ}(B^0) \subset B_{\Cor}\otimes \Cor_{K/\QQ}(B)$, where $B^0$ denotes the space of trace 0 quaternion elements. The lemma (lemma \ref{sig}) we have proved on the signature of $Q$ determines the ramifying data of $B$ at infinity, which is exactly the condition stated in ~\ref{ram}. 

Moreover, this construction can be made integral. Lattices can be obtained by looking at the intersection of quaternion orders and trace zero space. For specific computations, see section 3 of \cite{me1}.

In \cite{Gee06} van Geemen worked out the representation of the Kuga-Satake variety of RM K3 surfaces over $\RR$, which are direct sums of the representations given by $\Nmm$ (see lemma~\ref{ksrep}). Since $T(S)_{\CC}$ has Hodge numbers $(1,3d-2,1)$ if and only if one of the subspaces in (\ref{split}) is of Hodge number $(1,1,1)$ which is simultaneously the subspace in $\Cor_{K/\QQ}(B^0)$ where $Q^{\sigma_i}$ splits, this necessarily forces the Deligne's torus to be embedded in $G$ exactly the way we specified in section~\ref{sec3}. This implies that the Kuga-Satake variety of $T(S)$ is indeed given by the generalized Mumford construction.

\end{proof}

\begin{remark} This statement is also proven by Schlikewei in \cite{Sch09} by finding copies of $\Cor_{K/\QQ}(B)$ in $C^0(T(S))$. Still, one should note that the isomorphism of Hodge substructures in \cite{Sch09} is not canonical. The generalized Mumford's construction provides a canonical way to roundabout the choices made in \cite{Sch09}. In fact, for a general quadratic form $Q$ with signature $(2, n-2), (0,n), ... (0,n)$, we can replace $\Cor_{K/\QQ}(B)$ with $\Cor_{K/\QQ}(C^0(Q))$, and $B_{\Cor}$ with a minimal dimensional Brauer group representative of $\Cor_{K/\QQ}(C^0(Q))$, which by Merkurjev's theorem would become a quaternion over $\QQ$. Analogous computations to section \ref{sec3} would prove that the resulting abelian variety has the desired endomorphism, which is indeed Brauer-equivalent to Schlikewei's result.
\end{remark}

We now focus on the case when $d=2$ and $n=3$, and prove the following theorem:

\begin{theorem} Any simple Abelian fourfold $A$ with endomorphism by a definite quaternionic order can be realized as a Kuga-Satake variety of a K3 surface of Picard rank $16$ with real multiplication by a quadratic field, and the relation between the polarization of $A$ and the transcendental lattice of the K3 surface is exactly given by theorem \ref{main}. 
\end{theorem}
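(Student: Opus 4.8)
The plan is to read this statement as the case $d=2$, $n=3$ of Theorem~\ref{main} and to run that correspondence backwards: starting from the abelian fourfold one reconstructs the arithmetic data $(K,B,Q)$, and then the preceding $n=3$ theorem together with the surjectivity of the period map produces the desired K3 surface whose Kuga-Satake variety recovers $A$.

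First I would identify the special Mumford-Tate group of $A$. Since $A$ is simple of dimension $4$ and its endomorphism order spans a totally definite quaternion algebra over $\QQ$ (Albert type III), $H^1(A,\QQ)$ is a free module of rank $2$ over this quaternion, and the Rosati involution attached to the polarization is the canonical symplectic-type involution. The alternating polarization is therefore the transfer of a rank-$2$ skew-Hermitian form over the quaternion; putting it in the normal form $\left(\begin{smallmatrix} 0 & D\\ -D & 0\end{smallmatrix}\right)$ and invoking the classification of simple abelian fourfolds, this transfers to a rank-$4$ quadratic form $D$ over $\QQ$, positive definite by the Riemann positivity of the polarization, and the special Mumford-Tate group is $\SO(D)$ (the dimension count $\dim\SO(D)=6$ matching the real form $SO^{*}(4)$ of the unitary group of the skew-Hermitian form). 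This fixes the left-hand side of the dictionary.

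Next I would extract $K$, $B$ and the ternary form $Q$ from $D$ through its even Clifford algebra. Because $D$ is quaternary, $C^0(D)$ is a quaternion algebra over the discriminant extension $\QQ[\sqrt{\disc D}]$; simplicity of $A$ --- equivalently $\QQ$-simplicity of $\SO(D)$, so that $C^0(D)$ does not split as a product of two $\QQ$-quaternions --- forces $\disc D$ to be a nonsquare, and positive definiteness of $D$ then makes $K:=\QQ(\sqrt{\disc D})$ a real quadratic field over which $D$ splits as a pair of Galois-conjugate conics $Q\times\overline Q$. The positivity of $D$ translates into exactly one of these conjugate conics having real points, i.e.\ into the infinite-place ramification condition $B\otimes_\QQ\RR\cong\HH\oplus M_2(\RR)$ of~\eqref{ram} for the quaternion $B=C^0(Q)$; by Lemma~\ref{sig} the attached ternary form $Q$ (the norm form on the trace-zero part $B^0$, up to similarity) has signature $(2,1),(0,3)$, precisely the signature of a transcendental form.

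With $(K,Q)$ in hand I would construct the surface: $\Cor_{K/\QQ}(Q)$ is a weight-$2$ Hodge structure of K3 type of rank $n\cdot d=6$ and signature $(2,4)$ carrying real multiplication by $K$, and after choosing an $\mathcal{O}_K$-lattice $\Lambda$ for which $\Cor_{K/\QQ}(\Lambda)$ is even and primitively embeds into the K3 lattice, the surjectivity of the period map and the Torelli theorem yield a complex K3 surface $S$ with $T(S)\cong\Cor_{K/\QQ}(Q)$; for a very general period in the resulting family the transcendental lattice is exactly this rank-$6$ lattice, so the Picard rank is $22-6=16$ and the Hodge endomorphism field is exactly $K$. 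Applying the preceding $n=3$ theorem to $S$ then identifies its Kuga-Satake variety, up to isogeny, with the generalized Mumford construction of section~\ref{sec3} attached to $(K,B)$, whose endomorphism algebra is $B_{\Cor}$, Brauer-equivalent to $\Cor_{K/\QQ}(B)$; since $B_{\Cor}$ is the $\QQ$-quaternion class determined by $D$, this recovers $A$ up to isogeny, while $\disc(T_S)$ is read off from the discriminant formula of the earlier Lemma, giving the relation recorded in Theorem~\ref{main}. The hard part will be this middle bookkeeping: making the even Clifford algebra computation precise enough that the class $B_{\Cor}$ produced on the Kuga-Satake side agrees on the nose with $\End^0(A)$, and that the real-place data of $D$, $K$ and $B$ match so that exactly one conjugate conic is real; one must also check the lattice refinement so that the prescribed definite quaternionic order (not merely its isogeny class) is recovered, and that the generic member of the period family has Picard rank exactly $16$.
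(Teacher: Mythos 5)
Your proposal is correct in substance and reaches the same conclusion, but it is organized in the opposite direction from the paper's proof and leans on different supporting machinery. The paper works forward from $A$: it quotes Moonen--Zarhin (section 6.1 of \cite{Z-M95}) to get that the special Mumford--Tate Lie algebra is $\mathfrak{so}_4$, passes to the $\SL_2\times\SL_2$ picture, explicitly decomposes $\Sym^2(H^1(A))$ and $\wedge^2(H^1(A))$ and computes Hodge numbers $(3,3,3)$ and $(1,4,1)$ to exhibit the K3-type Hodge structure $\wedge^2 W_{1,1}=W_{2,0}\oplus W_{0,2}$ sitting inside $H^2(A\times A)$; it then argues geometrically via the two rulings of the quadric in $\PP^3$ that the conics must be conjugate over a real quadratic field, ruling out a splitting over $\QQ$ because $W_{2,0}$ and $W_{0,2}$ would then be $\QQ$-rational and force $A$ to decompose. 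You instead extract $(K,B,Q)$ algebraically through the even Clifford algebra $C^0(D)$ and its center $\QQ[\sqrt{\disc D}]$, build the K3 surface from $\Cor_{K/\QQ}(Q)$ via the surjectivity of the period map and Torelli, and only then invoke the preceding $n=3$ theorem to match the Kuga--Satake variety with $A$. The two mechanisms for using simplicity are equivalent ($C^0(D)$ splitting as a product of $\QQ$-quaternions is the same phenomenon as the quadric splitting into two $\QQ$-rational rulings), so nothing is lost; what your route buys is an explicit construction of the surface $S$ itself, with attention to the lattice-theoretic conditions (evenness, primitive embedding, genericity giving Picard rank exactly $16$) that the paper leaves implicit, while the paper's route buys a direct, representation-theoretic location of $T(S)_\QQ$ inside $H^2(A\times A)$ without appealing to the period map. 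The one place where you should be careful is the final identification ``this recovers $A$ up to isogeny'': matching Mumford--Tate groups and endomorphism algebras pins down the family, not the individual member, so you must choose the period of $S$ to correspond to that of $A$ under the identification of period domains rather than taking a very general point --- a point the paper also elides, but which your phrasing makes more visible.
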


\begin{proof}To prove the theorem, we show that 1) there is a Hodge structure of K3 type in the second cohomology of $A$, and 2) the special Mumford-Tate group of $A$ is a (double cover of) $\SO_K(Q)$ for some ternary quadratic form $Q$ with signature $(2,1),(0,3)$.

It has been proven (see section 6.1 of \cite{Z-M95}) that for a simple Abelian fourfold with endomorphism by a definite quaternion order, the Lie algebra of the special Mumford-Tate group over $\CC$ is isomorphic to $\mathfrak{so}_4$. Moreover, there's a canonical isomorphism between the Lie algebra of $\SL_2 \times \SL_2$ and $\SO(4)$ (cf. pp 274-275 of \cite{F-H91}), and it is established that the standard representation of $\SO(4)$ corresponds to the $W_{1,1}$ of $\SL_2 \times \SL_2$. In this proof we shall adopt the $\SL_2$ notation. We have the following computation: $\Sym^2(H^1(A)) = 3\Sym^2 W_{1,1} \oplus \wedge^2 W_{1,1}$, and $\wedge^2(H^1(A)) = \Sym^2 W_{1,1} \oplus 3 \wedge^2 W_{1,1}$. By comparing the Hodge numbers of $\Sym^2(H^1(A))$ and $H^2(A)$ and counting the copies of sub-representations, it is not hard to work out the Hodge number of $\Sym^2 W_{1,1}$ and $\wedge^2 W_{1,1} = W_{2,0}\oplus W_{0,2}$, which are $(3,3,3)$ and $(1,4,1)$ respectively. Hence there is a Hodge structure of K3 type in $H^2(A \times A)$.

The identification of $\SL_2 \times \SL_2$ with $\SO(4)$ given by Fulton and Harris is geometric: it is obtained by the canonical isomorphism between a quadric surface in $\PP^3$ and the two rulings by conics. Hence in our case, the special Mumford-Tate group is a norm-1 subgroup of a quaternion $B$ satisfying the splitting condition if and only if the associated quadric hypersurface does not decompose into the product of conics over $\QQ$ but over a quadratic field extension, over which the two conics are conjugate to each other. This quadratic field extension, according to our configuration, is exactly the RM field of the K3 surface, and the corestriction of the conics will give the transcendental lattice of the K3 surface.

It remains to show that the quadric surface cannot split over $\QQ$ as the product of two conics $C_1$ and $C_2$. Suppose otherwise. Then we can read off the two factors of the $\QQ$-form of $\SL_2$ by associating each $C_i$ with a class in $\Br(\QQ)$ hence a quaternion algebra's norm-1 subgroup $G_i$. However, in this case the $\wedge^2 W_{1,1}$ decomposes since $W_{2,0}$ and $W_{0,2}$ are now $\QQ$-space. The resulting Abelian variety would then be products of Abelian varieties of lower dimension which are no longer simple.
\end{proof}

\end{document}